\newtheorem{theo}{Theorem}[section]
\newtheorem{definition}[theo]{Definition}
\newtheorem{prop}[theo]{Proposition}
\newtheorem{coro}[theo]{Corollary}
\newtheorem{lem}[theo]{Lemma}
\newtheorem{rem}[theo]{Remark}
\newtheorem{ass}[theo]{Assumption}
\newcommand{\one}{\mathds{1}}
\newcommand{\conv}{\mathrm{conv}}
\newcommand\smallO{
  \mathchoice
    {{\scriptstyle\mathcal{O}}}% \displaystyle
    {{\scriptstyle\mathcal{O}}}% \textstyle
    {{\scriptscriptstyle\mathcal{O}}}% \scriptstyle
    {\scalebox{.7}{$\scriptscriptstyle\mathcal{O}$}}%\scriptscriptstyle
  }
\title{Fairness guarantee in multi-class classification}
\begin{document}

\title{Fairness guarantee in multi-class classification}
 \author{Christophe Denis$^{(1)}$,
 %\footnote{Christophe.Denis@univ-eiffel.fr} 
 Romuald Elie$^{(1)}$,
 Mohamed Hebiri$^{(1)}$, %\footnote{Mohamed.Hebiri@univ-eiffel.fr} 
and Fran\c{c}ois Hu$^{(2)}$\\
\small{$^{(1)}$LAMA, UMR-CNRS 8050, Universit\'e Gustave Eiffel}\\
\small{$^{(2)}$ Département de Mathématiques et Statistique, Université de Montreal}}
 \date{}
 
\maketitle

\begin{abstract}
Algorithmic Fairness is an established area of machine learning, willing to reduce the influence of hidden bias in the data. Yet, despite its wide range of applications, very few works consider the multi-class classification setting from the fairness perspective.
We focus on this question and extend the definition of approximate fairness in the case of \emph{Demographic Parity} to multi-class classification. We specify the corresponding expressions of the optimal fair classifiers. 
This suggests a plug-in data-driven procedure, for which we establish theoretical guarantees.  
The enhanced estimator is proved to  mimic the behavior of the optimal rule both in terms of fairness and risk. Notably, fairness guarantees are distribution-free. 
The approach is evaluated on both synthetic and real datasets and reveals very effective in decision making with a preset level of unfairness. In addition, our method is competitive (if not better) with the state-of-the-art in binary and multi-class tasks. 
\end{abstract}

\section{Introduction}
Algorithmic fairness has become very popular during the last decade ~\cite{zemel2013learning,lum2016statistical,calders2009building,zafar2017fairness,agarwal2019fair,Agarwal_Beygelzimer_Dubik_Langford_Wallach18,donini2018empirical,Chzhen_Denis_Hebiri_Oneto_Pontil19,chiappa2020general,barocas-hardt-narayanan} as it addresses an important social concern: mitigating historical bias contained in the data. This is a crucial issue in many applications such as loan assessment or criminal sentencing among others. 
The main objective in algorithmic fairness consists in reducing the influence of a sensitive attribute on a prediction. Several notions of fairness have been considered in the literature for binary classification~\cite{zafar2019fairness,barocas-hardt-narayanan}. All of them impose some independence condition between the sensitive feature and the prediction. 
This independence can be desired on some or all values of the label space, see \emph{Equality of odds} or \emph{Equal opportunity}~\citep{hardt2016equality}. In this paper, we focus on the well established \emph{Demographic Parity} (DP)~\citep{calders2009building} that requires the independence between the sensitive feature and the prediction function, while not relying on labels. DP has a recognized interest in many applications, such as loan agreement without gender attributes or crime prediction without ethnicity discrimination \citep{Hajian_2011_discrimation,KamiranZC13RemoveIllegal,barocas2014datas,feldman_certifying_2015}. 
Previously mentioned references consider either the regression or the binary classification frameworks, although most (modern) applications fall within the scope of multi-class classification (\emph{e.g.} image recognition or text categorization).
As an example, one might cite hiring tools based on Machine Learning (ML) models to give candidates one- to five-star ratings and favors men for software developers and other technical positions \citep{dastin2018amazon}.

The present work fills two gaps in the literature: i) it extends algorithmic fairness to the multi-class setting; ii) it properly studies the approximate fairness (also called as $\varepsilon$-fairness) from the theoretical point of view. Indeed, approximate fairness is known to be very efficient from a practical perspective~\cite{barocas-hardt-narayanan,zafar2019fairness}. Nevertheless, main existing theoretical results only focus on exact fairness constraints, that is, they do not allow for deviating from a perfectly fair algorithm.

\subsection{Main contributions}

Overall, we emphasize that the present paper considers both the theoretical and the practical aspects of approximate fairness
under the popular demographic parity constraint. Up to our knowledge, this is the first contribution that combines both aspects in the multi-class setting.  

We establish a closed formula of the optimal predictor for both exact and approximate fairness constraints. Our proposed procedure is a post-processing algorithm which relies on solving a constrained minimization problem. Specifically, in a first step we estimate the conditional probabilities of the output label given the sensitive attribute and the feature vectors, while a second step of the algorithm is dedicated to enforce fairness by shifting the estimated conditional probabilities in an optimal manner. 
We derive fairness and risk guarantees for our estimation procedure with explicit finite sample bounds. 
We also highlight the numerical performance of our algorithm and show that it performs as good as state-of-the-art multi-class methods for fair (or approximate fair) prediction.  
One of the main striking features of our procedure is that it can be applied to any off-the-shelf estimator of the conditional probabilities and it succeeds to enforce fairness at any pre-specified level.

We want to underline that the extensions, with respect to the existing literature, to multi-class and to approximate fairness are two theoretical aspects of the contribution. Both considerations involve additional technical arguments. In particular, dealing with approximate fairness is a new  technical challenge. It is worth noticing that even in the binary classification setting, the control of the unfairness of the algorithms has not been analyzed theoretically.

Let us now summarize our main contributions:
\begin{itemize}
    \item We provide an optimal solution for the multi-class problem under exact or approximate DP constraints.
    In particular, we derive a closed formula for the optimal (approximate) fair classifier.
    \item Based on this formula, we build a data-driven procedure that mimics the performance of the optimal rule both in terms of risk and fairness. Notably, our fairness guarantees are \emph{distribution-free}
    and are established both in expectation and with high probability.
    \item We also established rates of convergence for the resulting classifier {\it w.r.t.} a suitable risk that combines both the error rate and the unfairness measure. A salient point of our theoretical findings is that our procedure achieves fast rates of convergence under a Margin type assumption. 
    \item The approach is illustrated on several real and synthetic datasets with various bias levels. It provides robust and effective decision making rules with a preset level of unfairness.
\end{itemize}

\subsection{Related works}

There are mainly three ways to build fair prediction: i) \emph{pre-processing} methods mitigate bias in the data before applying classical ML algorithms, see for instance~\cite{adebayo2016iterative,calmon2017optimized,zemel2013learning}; ii) \emph{in-processing} methods reduce bias during training, see for instance~\citep{Agarwal_Beygelzimer_Dubik_Langford_Wallach18,Donini_Oneto_Ben-David_Taylor_Pontil18,agarwal2019fair}; 
iii) \emph{post-processing} methods enforce fairness after fitting, see for instance~\cite{Hardt_Price_Srebro16,chiappa2020general,Chzhen_Denis_Hebiri_Oneto_Pontil20TV,gouic2020price}. The present work falls within the last category. In a related study, \citep{Chzhen_Denis_Hebiri_Oneto_Pontil19} exhibits fair binary classifiers under \emph{Equal Opportunity} constraints. In contrast, we focus on the multi-class setting, while imposing \emph{DP} constraints and we also treat the case of approximate fairness.

Up to our knowledge, only few works consider fairness in the multi-class setting. In~\cite{Ye_Xie_2020_fairness}, the authors enforce fairness by sub-sample selection and is in-processing.
In contrast, we keep the whole sample and enforce fairness in a post-processing manner. Besides, from a high-level perspective, the procedure described  in~\citep{Ye_Xie_2020_fairness} imposes fairness on each component of the score function. It is clear that such methodology can be generalized to any convex empirical risk minimization (ERM) problem such as SVM or quadratic risk.  But, since the decision rule in the multi-class setting relies on the maximizer over scores, we rather directly impose fairness on the maximizer itself.

The multi-class framework is also considered in~\citep{Adversarial19,Tavkeretal2020, alghamdi2022beyond}. 
However, the authors in~\citep{Adversarial19,Tavkeretal2020} do not provide an explicit formulation of the optimal fair rule and their theoretical fairness guarantee is not distribution free. In addition, they only consider numerical experiments for binary classification. Finally, the recent work in ~\citep{alghamdi2022beyond} consider projecting an unfair classifier into a set of fair classifiers. However, as illustrated 
in Section~\ref{subsec:RealData}, their method seems to fail in \textit{exact} fairness.
Our method provides valuable benefits on all these aspects.

\subsection{Outline of the paper}

In Section~\ref{sec:FMCApprox}, we define the Demographic Parity constraint and the notion of  exact/approximate fair classifier in the multi-class classification setup. An explicit expression of the optimal fair classifier is also provided in Section~\ref{sec:FMCApprox}. The corresponding data-driven procedure together with its statistical guarantees on risk and fairness are presented in Section~\ref{sec:datadriven}. The numerical performance of the procedure is illustrated on both synthetic and real datasets in Section~\ref{sec:Evaluation}. The paper concludes with a discussion and perspective Section~\ref{sec:conclusion}.
For ease of readability, proofs and technical arguments are postponed to the Appendix of the paper.

\section{Multi-class classification with demographic parity}
\label{sec:FMCApprox}

Let $(X,S,Y)$ be a random tuple with distribution $\mathbb{P}$, where $X \in \cal{X}\subset$ $\mathbb{R}^{d}$, $S \in \mathcal{S}:=\{-1,1\}$ and $Y \in [K]:=\{1, \ldots, K\}$ with $K$ a fixed number of classes. The distribution of the sensitive feature $S$ is denoted by $(\pi_s)_{s \in \mathcal{S}}$, and we assume that $\min_{s \in \mathcal{S}} \pi_s > 0$, meaning that we have access to both sensitive groups with non zero probability.
A classification rule $g$ is a function mapping $\mathcal{X} \times \{-1,1\}$ onto $[K]$, and its performance is evaluated through the misclassification risk
\begin{equation*}
\mathcal{R}(g) := \mathbb{P}\left(g(X,S) \neq Y\right) \enspace.
\end{equation*}
For $k\in[K]$, we denote $p_k(X,S):= \mathbb{P}\left(Y=k |X,S\right)$ the conditional probabilities. 
Recall that a Bayes classifier minimizing the misclassification risk $\mathcal{R}(\cdot)$ over the set $\mathcal{G}$ of all classifiers and is then given by
\begin{equation*}
g^*(x,s) \in \arg\max_{k} p_k(x,s) \;, \quad \mbox{for all } (x,s)\in \cal{X}\times \cal{S} \enspace.
\end{equation*}
We introduce in Section~\ref{subsec:demParity} the Demographic parity constraint as well as the definition of an approximate fair classifier. The characterization of the optimal fair
classifier and its main properties are provided in Section~\ref{subsec:optimalFairClassifier}.

\subsection{Demographic parity}
\label{subsec:demParity}

We consider DP constraint~\citep{calders2009building} that asks for independence of the prediction function from the sensitive feature $S$.
This definition naturally extends the DP constraint considered in binary classification \citep{agarwal2019fair,chiappa2020general,gordaliza2019obtaining,jiang2019wasserstein,oneto2019general}. 

Approximate fairness, also referred to as $\varepsilon$-fairness, is highly popular from a practical perspective, in particular when a strict fairness constraint strongly deflates the accuracy of the method. In this context, the user is allowed to adjust the fairness constraint if relevant or needed. Of course, such modularity has a cost: the solution is less fair than the exact fair one. Moreover, the chosen unfairness level has no convincing interpretation. Without clear justification, some empirical rules exist such as the forth-firth that tolerates an unfairness of $0.2$~\citep{HolzerHolzer00,Collins07,feldman_certifying_2015}. 
In this section, we consider \emph{approximate} fairness setting without discussing the issue of properly selecting of the unfairness level $\varepsilon$.

We define the notion of $\varepsilon$-fairness in the particular case of Demographic Parity.

\begin{definition}[$\varepsilon$-fairness \emph{w.r.t.} DP]
\label{def:espUnfairnessMeasure} 
The unfairness of a classifier $g\in\mathcal{G}$ is quantified by
\begin{equation*}
\begin{aligned}
\mathcal{U}(g) := & \max_{k \in [K]} \left|\mathbb{P}\left(g(X,S) = k | S=1 \right)  - \mathbb{P}\left(g(X,S) = k| S=-1\right) \right| \enspace .
\end{aligned}
\end{equation*} 
A classifier $g$ is $\varepsilon$-fair if and only if 
$\mathcal{U}(g) \leq  \varepsilon$. In particular, $\varepsilon=0$ means that $g$ is exactly fair.
 \end{definition}
Alternative measures of unfairness could be considered. The maximum can for instance be replaced by a summation over $k \in [K]$.
While both measures have their advantages, picking the maximum simplifies fairness evaluation in empirical studies.

\subsection{Optimal fair classifier}
\label{subsec:optimalFairClassifier}

Our goal is to derive an explicit formulation of the optimal $\varepsilon$-fair classifiers \emph{w.r.t.} the misclassification risk, denoted by $g^*_{\varepsilon-{\rm fair}} $, which solves $
\min_{g\in \mathcal{G}_{\varepsilon-{\rm fair}}} \mathcal{R}(g)$ where $\mathcal{G}_{\varepsilon-{\rm fair}}$ is the set of all $\varepsilon$-fair prediction functions. 
Its computation requires to properly balance misclassification risk together with fairness criterion. 
The first step is to write the Lagrangian of the above problem: for $\lambda^{(1)}=(\lambda_1^{(1)},\ldots,\lambda_K^{(1)})\in\mathbb{R}_{+}^K$ and $\lambda^{(2)}=(\lambda_1^{(2)},\ldots,\lambda_K^{(2)})\in\mathbb{R}_{+}^K$, we define the \emph{$\varepsilon$-fair-risk} as
\begin{small}
\begin{equation*}
\begin{aligned}
 \mathcal{R}_{\lambda^{(1)}, \lambda^{(2)}}(g) :=  \mathcal{R}(g) & 
 + \sum_{k = 1}^K \lambda_k^{(1)} 
[\mathbb{P}\left(g(X,S) = k| S=1\right)
 - \mathbb{P}\left(g(X,S) = k| S=-1\right) - \varepsilon] \\
 & + \sum_{k = 1}^K \lambda^{(2)}_k  [\mathbb{P}\left(g(X,S) = k| S=-1\right)
 - \mathbb{P}\left(g(X,S) = k| S=1\right) - \varepsilon] \enspace.
\end{aligned}
\end{equation*}
\end{small}
In order to characterize the optimal fair classifier, we also require the following technical condition.
\begin{ass}[Continuity assumption]
\label{ass:continuity}
The mapping
$t \mapsto \mathbb{P}\left(p_k(X,S)-p_j(X,S) \leq t |S=s\right)$ is assumed continuous, for any $ k,j  \in[K]$ and $s\in \mathcal{S}$.
\end{ass}
Assumption~\ref{ass:continuity} implies that the distribution of the differences $p_k(X,S)-p_j(X,S)$ has no atoms. It is required to derive a closed expression for $g^*_{\varepsilon-\rm fair}$ and insures an accurate calibration of the fairness at the prescribed level. Notice that in the binary case ($K=2$), it boils down to the continuity of $t \mapsto \mathbb{P}\left(p_k(X,S) \leq t |S=s\right)$ considered in~\citep{Chzhen_Denis_Hebiri_Oneto_Pontil19}. However when $K \geq 3$, these two conditions differ and we stress that Assumption~\ref{ass:continuity} is a well tailored condition for the multi-class problem.
\\
We are now in position to provide a characterization of optimal $\varepsilon$-fair classifier. 
\begin{theo}
\label{thm:equivalenceApprox}
Let $H: \mathbb{R}_{+}^{2K} \to \mathbb{R}$ be the function
\begin{small}
    \begin{equation*}
H(\lambda^{(1)}, \lambda^{(2)})  = \sum_{s \in \mathcal{S}} \mathbb{E}_{X|S=s}\left[\max_k\left(\pi_s p_k(X,s)-s(\lambda^{(1)}_k-\lambda^{(2)}_k)\right)\right] 
+ \varepsilon \sum_{k=1}^K (\lambda^{(1)}_k+\lambda^{(2)}_k)\enspace .
\end{equation*}
\end{small}
Let Assumption~\ref{ass:continuity} be satisfied and define $(\lambda^{*(1)} , \lambda^{*(2)}) \in \arg\min_{(\lambda^{(1)} , \lambda^{(2)}) \in \mathbb{R}_{+}^{2K}} H(\lambda^{(1)} , \lambda^{(2)}) $. 
Then, $g^*_{\varepsilon-{\rm fair}} \in \arg\min_{g\in \mathcal{G}_{\varepsilon-{\rm fair}}} \mathcal{R}(g)$ if and only if $g^*_{\varepsilon-{\rm fair}}  \in \arg\min_{g\in \mathcal{G}} \mathcal{R}_{\lambda^{*(1)} , \lambda^{*(2)}}(g)$.\\ In addition, for all $(x,s)\in {\cal{X}} \times {\cal{S}}$, we can rewrite the optimal classifier as
\begin{equation*}
g^*_{\varepsilon-{\rm fair}}(x,s) = \arg\max_{k \in [K]}  \left(\pi_s p_k(x,s)-s(\lambda^{*(1)}_k-\lambda^{*(2)}_k)\right).
\end{equation*}
\end{theo}
Theorem~\ref{thm:equivalenceApprox} entails a closed form expression of optimal fair classifiers, which is the bedrock of our procedure: any optimal fair classifier is simply maximizing scores, that are obtained by shifting the original conditional probabilities in a proper manner. The above result also points out that the optimum of the risk $\mathcal{R}$ over the class of fair classifiers also minimizes the fair-risk $\mathcal{R}_{\lambda^{*(1)}, \lambda^{*(2)}}$. Hence, by construction, $\mathcal{R}_{\lambda^{*(1)}, \lambda^{*(2)}}$ is a risk measure that efficiently balances both classification accuracy and unfairness.
An important consequence of the proof of Theorem~\ref{thm:equivalenceApprox} is the following proposition that more precisely characterizes the Lagrange multipliers ($\lambda^{*(1)}, \lambda^{*(2)}$), and the level of unfairness of the $\varepsilon$-fair predictor.

\begin{prop}
\label{prop:LagrangeCharac}
Let $\varepsilon \geq 0$. For each $k \in [K]$, we have that
%\begin{equation*}
$\lambda^{*(1)}_k\lambda^{*(2)}_k = 0 \;\; {\rm and} \;\; \lambda^{*(1)}_k +\lambda^{*(2)}_k \geq 0$.
%\end{equation*}
Besides, if for some $k$
\begin{enumerate}
    \item[$i)$] $\lambda^{*(1)}_k > 0$, then $
\mathbb{P}_{X|S=1}\left(g^{*}_{\lambda^{*(1)},\lambda^{*(2)}}(X,S) = k \right) - \mathbb{P}_{X|S=-1}\left(g^*_{\lambda^{*(1)},\lambda^{*(2)}}(X,S) = k \right) = \varepsilon$,
    \item[$ii)$] $\lambda^{*(2)}_k > 0$, then 
$
\mathbb{P}_{X|S=1}\left(g^{*}_{\lambda^{*(1)},\lambda^{*(2)}}(X,S) = k \right) - \mathbb{P}_{X|S=-1}\left(g^*_{\lambda^{*(1)},\lambda^{*(2)}}(X,S) = k \right) = - \varepsilon$.
\end{enumerate}
\end{prop}
From the above result, we easily deduce the following corollary.
\begin{coro}
Let $\varepsilon \geq 0$. It holds that
\label{cor:UnfairnessCharac}
\begin{enumerate}
\item[$i)$] either the Bayes classifiers satisfies $\mathcal{U}(g^*) \leq \varepsilon$ and
then $g^* = g^*_{{\varepsilon-{\rm fair}}}$. In this case $\lambda^{*(1)} = \lambda^{*(2)}=0$;
\item[$ii)$] or the $\varepsilon$-fair classifier satisfies $\mathcal{U}(g^*_{{\varepsilon-{\rm fair}}}) = \varepsilon$.
\end{enumerate}
\end{coro}
A straightforward consequence of the above Proposition~\ref{prop:LagrangeCharac} and Corollary~\ref{cor:UnfairnessCharac} is that  
\begin{equation*}
0 \leq \mathcal{R}(g^*_{\varepsilon-{\rm fair}})=\mathcal{R}_{\lambda^{*(1)}, \lambda^{*(2)}}(g^*_{\varepsilon-{\rm fair}}) \leq \mathcal{R}_{\lambda^{*(1)}, \lambda^{*(2)}}(g) 
\leq R(g) + C \left(\mathcal{U}(g)-\varepsilon\right),
\end{equation*}
for all $g\in\mathcal{G}$ and for some constant $C >0$ that depends on $K$.
In the case of exact fairness ({\it e.g.} $\varepsilon = 0$) the following remark gives a specific characterization of the exact fair classifier.
\begin{rem}[Exact fairness]
    All previous results simplify in the exact fairness case setting where $\varepsilon = 0$. 
    Considering the reparametrization $\beta^*_k := \lambda^{*(1)}_k-\lambda^{*(2)}_k \in \mathbb{R}$, we deduce the optimal fair classifier in this case
    \begin{equation*}
        g_{{\rm fair}} ^{*}(x,s) \in \arg\max_{k} \left(\pi_s p_k(x,s)-s\beta^*_k\right), \;\, (x,s)\in\mathcal{X}\times\mathcal{S},
    \end{equation*}
    where  
    \begin{equation*}
        \beta^* \in \arg\min_{\beta \in \mathbb{R}^K} \sum_{s \in \mathcal{S}} \mathbb{E}_{X|S=s}\left[\max_k\left(\pi_s p_k(X,s)-s\beta_k\right)\right]\enspace .
    \end{equation*}
In view of Corollary~\ref{cor:UnfairnessCharac}, we have $\mathcal{U}(g_{{\rm fair}}^*) = 0$.
\end{rem}

\paragraph{Binary classification} Finally, we conclude this section with a particular focus on the binary classification setting where specific characterization of the optimal fair predictor can be obtained. 
\begin{coro}
\label{cor:UnfairnessShift}
Let $\varepsilon \geq 0$.
In the binary setting ($K=2$ with label space $\mathcal{Y} = \{0,1\}$),
the fairness constraint reduces to a single condition and 
the optimal fair classifier simplifies as
\begin{equation*}
g_{{\rm fair}} ^{*}(x,s) = \one_{\{ p_1(x,s) \geq \frac{1}{2} + \frac{s \beta^*}{2 \pi_s} \} }, \quad (x,s)\in\mathcal{X}\times\mathcal{S} \enspace,
\end{equation*} 
where, with the notation $F_s(t) = \mathbb{P}\left( p_1(X,S) \leq t  \; |  \; S=s \right)$ we have
\begin{enumerate}
\item[$i)$]$ \beta^* =0$ if $\left| F_1\left(\frac{1}{2}\right) - F_{-1} \left(\frac{1}{2} \right)\right| \leq \varepsilon$;
\item[$ii)$] $\beta^*$ is solution in $\beta$ of $\left| F_1\left(\frac{ \beta + \pi_1}{2 \pi_1}\right) = F_{-1} \left(\frac{ - \beta + \pi_{-1}}{2 \pi_{-1}} \right)\right| = \varepsilon$ otherwise.
\end{enumerate}
\end{coro}
The proof of this result follows directly from Theorem~\ref{thm:equivalenceApprox} by considering classifiers $g$ that satisfy the fairness constraint $ \left\vert \mathbb{P}\left(g(X,S) = 1  \ | \ S=-1 \right) - \mathbb{P}\left(g(X,S) = 1  \ | \ S= 1\right) \right\vert \leq \varepsilon$. This constraint ensures that the condition is also satisfied for $g(X,S) = 0$ since $g$ is a binary function.

The above result highlights several important facts about the characterization of the optimal fair classifier in the binary setting. First, the optimal rule is deduced just by thresholding the conditional probability $p_1$. The thresholding is not at the classical level $1/2$ ({\it e.g.} without fairness constraint) but at a shifting of this value by $\frac{s \beta^*}{2 \pi_s}$ to enforce fairness. 
Second, observe that the rule only depends on $p_1$ (and not $p_0$) for the same reason as in classical binary classification, that is $p_0 = 1-p_1$. This yields to a reduction of the number of Lagrange parameters
into a single one $\beta^*$. 
Notice that the case $\beta^* = 0$ means that the Bayes rule is already fair and then coincides with the $\varepsilon$-fair optimal predictor. In contrast, if $\beta^* \neq 0$, the optimal $\varepsilon$-fair rule differs from the Bayes rule and the modification of the rule is deduced by shifting the conditional probability.

\section{Data-driven procedure}
\label{sec:datadriven}

This section is devoted to the definition and the theoretical study of our empirical procedure
that relies on the {\it plug-in} principle.
The construction of our estimator is formally presented in Section~\ref{subsec:plug-in}
while its statistical properties are provided in~Section~\ref{subsec:consistency}.

\subsection{Plug-in estimator}
\label{subsec:plug-in}

The enhanced estimation procedure is in two steps. According to the definition of the optimal $\varepsilon$-fair predictor given in Theorem~\ref{thm:equivalenceApprox}, we first build estimators of the conditional probabilities $({p}_k)_k$ and then proceed with the estimation of the parameters $\lambda^*$ and $(\pi_s)_{s \in \mathcal{S}}$.
Notably, our data-driven procedure is semi-supervised as it relies on two independent datasets, one labeled and another unlabeled.

The first \emph{labeled} dataset $\mathcal{D}_n =  (X_i,S_i,Y_i)_{i= 1,\ldots ,n}$ contains \emph{i.i.d.} samples from the distribution $\mathbb{P}$. It allows to train estimators $(\hat{p}_k)_k$ of the conditional probabilities $({p}_k)_k$ by the means of any machine learning supervised algorithm, \emph{e.g.,} Random Forest, SVM. At this level, it is important to stress a key feature of the algorithm. Ones the empirical conditional probabilities $\hat{p }_k$ are trained, the theoretical analysis of the risk and the unfairness of the plug-in rule requires continuity conditions on the random variables $\hat{p}_k(X,S)$ (conditional on the learning sample, see Assumption~\ref{ass:continuity}). 
Notably, this is automatically satisfied whenever perturbing $(\hat{p}_k)_k$ with a continuous random noise (with a small magnitude to avoid deflating the statistical properties of the estimate). We insure such a property simply by randomization.
Indeed, let $u$ be a non negative real number. For each $k \in [K]$, we introduce
\begin{equation*}
\bar{p}_k(X,S,\zeta_k) := \hat{p}_k(X,S) + \zeta_k,
\end{equation*}
with $(\zeta_k)_{k \in[K]}$ being \emph{i.i.d.} according to a uniform distribution on $[0,u]$. This perturbation improves the fairness calibration in both theory and practice due to the fact that atoms for the random variables $\hat{p}_k(X,S)-\hat{p}_j(X,S)$ are avoided in this case.

The second \emph{unlabeled} dataset $\mathcal{D}'_N$ contains $N$ \emph{i.i.d.} copies of $(X,S)$. It is used to calibrate fairness. For $s\in\mathcal{S}$, the number of observations corresponding to $S=s$ is denoted by $N_s$, so that $N_{-1}+N_{1} = N$. On the one hand, the feature vectors in $\mathcal{D}'_N$ are denoted by $X_1^s, \ldots , X_{N_s}^s$ and are \emph{i.i.d.} data from the distribution $\mathbb{P}_{X^s}$ of $X|S=s$. On the other hand, the sensitive features from 
$\mathcal{D}'_N$ are denotes by $(S_1, \ldots, S_N)$.
The latter are \emph{i.i.d.} and are 
used to compute empirical frequencies $(\hat{\pi}_s)_{s\in\mathcal{S}}$ as estimates of $(\pi_s)_{s\in\mathcal{S}}$ (recall that $\pi_s = \mathbb{P}(S=s)$). Now notice that the estimation of parameters $(\lambda^{*(1)}, \lambda^{*(2)})$ only involves marginal distributions of $\mathbb{P}_{X|S=s}$ and $\mathbb{P}_{S}$. Therefore, this estimation part relies on the estimators $\hat{\pi}_s$, on the feature vectors $(X_1^s, \ldots , X_{N_s}^s)$, and on independent copies $(\zeta_{k,i}^s)_{k \in [K], i \in [Ns]}$ of a Uniform distribution on $[0,u]$ (for $s \in \mathcal{S}$).
In particular, we define $(\hat{\lambda}^{(1)},\hat{\lambda}^{(2)})$ as a minimizer over $\mathbb{R}_{+}^{2K}$ of $ \hat{H} (\lambda^{(1)} , \lambda^{(2)}) $ that is defined by (see the population counterpart given in Theorem~\ref{thm:equivalenceApprox})
    \begin{multline}
\label{eq:LamPlugInEps}
\hat{H}(\lambda^{(1)}, \lambda^{(2)})  := \sum_{s \in \mathcal{S}}
\frac{1}{N_s} \sum_{i=1}^{N_s}
\left[\max_k\left(\hat{\pi}_s \bar{p}_k(X_i^s,s,\zeta_{k,i}^s)-s(\lambda^{(1)}_k-\lambda^{(2)}_k)\right)\right] 
+ \varepsilon \sum_{k=1}^K (\lambda^{(1)}_k+\lambda^{(2)}_k)\enspace .
\end{multline}
Finally, our \emph{randomized} fair algorithm $ \hat{g}$ is defined as
\begin{equation}
\label{eq:eqPlugInEps}
\hat{g}_{\varepsilon}(x,s) = \arg\max_{k \in [K]}  \left(\hat{\pi}_s \bar{p}_k(x,s , \zeta_k )-s(\hat{\lambda}^{(1)}_k-\hat{\lambda}^{(2)}_k)\right)\enspace , \qquad (x,s)\in {\cal{X}} \times {\cal{S}}\;,
\end{equation}
Note that the construction of the plug-in rule $\hat{g}$ relies on $(x,s)$ but also on the perturbations $\zeta_k$ and $\zeta_{k,i}^s$ for $k\in [K]$, $i \in [N_s]$, and $s\in \mathcal{S}$, that are easily collected as \emph{i.i.d.} uniform random variables.
\begin{rem}
\label{rq:splitting}
    Classical datasets often contain only labeled samples. Then, our approach requires to split the data into two independent samples $\mathcal{D}_n $ and $\mathcal{D}'_N $, by removing labels in the latter.
    As illustrated in Section~\ref{subsec:Evaluation}, this splitting step is important to get the right level of fairness.
\end{rem}

\subsection{Statistical guarantees}
\label{subsec:consistency}
We are now in position to derive fairness and risk guarantees of our plug-in procedure. We need the following additional notation:
$\pi_{\min} :=\min_{s \in \mathcal{S}} \pi_s$ % is the minimal frequency over the sensitive attribute 
and $N_{\min} = \min(N_1 ; N_{-1})$.

\subsubsection{Universal fairness guarantee}

We first focus on fairness assessment and prove that the plug-in estimator $\hat{g}$ is asymptotically $\varepsilon$-fair, that is, it satisfies the requirement of Definition~\ref{def:espUnfairnessMeasure}. This control on the fairness will be established both in expectation and with high probability. In addition, we prove that the convergence rate of the unfairness to zero is parametric with the number of unlabeled data $N$. Notably, the fairness guarantee is distribution-free and holds for any estimators of the conditional probabilities.
\begin{theo}
\label{thm:unfairness} 
Let $\varepsilon \geq 0$.
There exists a constant $C > 0$ depending only on $K$ and $\pi_{\min}$ such that, for any estimators $\hat{p}_k$ of the conditional probabilities,  we have 
\begin{equation*}
\mathbb{E}\left[\mathcal{U}(\hat{g}_{\varepsilon})\right] \leq \varepsilon + \frac{C}{\sqrt{N}} \enspace.
\end{equation*}
\end{theo}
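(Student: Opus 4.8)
The plan is to control the unfairness $\mathcal{U}(\hat g)$ by relating the empirical criterion in \eqref{eq:LamPlugIn} to the population criterion appearing in Proposition~\ref{prop:equivalence}, and to exploit the fact that the \emph{gradient} (in $\lambda$) of the population objective is precisely the vector of fairness violations. Concretely, write $H_N(\lambda) := \sum_{s} \frac{1}{N_s}\sum_{i=1}^{N_s}\max_k(\hat\pi_s \bar p_k(X_i^s,s,\zeta^s_{k,i})-s\lambda_k)$ and let $H(\lambda)$ be its conditional expectation given the labeled sample $\mathcal{D}_n$ and the perturbations $(\zeta_k)_k$ (so $H$ uses $\pi_s$ and $\mathbb{P}_{X^s}$ and the frozen $\hat p_k$). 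The function $\lambda\mapsto H(\lambda)$ is convex, and a Danskin/envelope-type computation shows that its subgradient at $\lambda$ contains the vector with $k$-th component $s\mapsto$ essentially $\mathbb{P}(\hat g_\lambda(X,S)=k\mid S=1)-\mathbb{P}(\hat g_\lambda(X,S)=k\mid S=-1)$, up to the $\hat\pi$ vs $\pi$ discrepancy and the usual $\pm$ signs. Here the perturbation $\zeta_k\sim\mathrm{Unif}[0,u]$ is what makes $H$ differentiable: the ties among the shifted scores have probability zero, so the argmax is a.s.\ unique and the envelope theorem applies cleanly. Since $\hat\lambda$ minimizes $H_N$, it \emph{nearly} minimizes $H$, hence $\|\nabla H(\hat\lambda)\|$ is small; translating this back gives a bound on $\mathcal{U}(\hat g)$ up to the error $|\hat\pi_s-\pi_s|$.

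First I would make the envelope identity precise: for fixed $\lambda$, differentiating $H$ in $\lambda_k$ yields $\nabla_k H(\lambda) = -\big(\mathbb{P}(\hat g_\lambda=k\mid S=1)-\mathbb{P}(\hat g_\lambda=k\mid S=-1)\big)$ (modulo the $\hat\pi/\pi$ substitution), using that the probability of a tie is $0$ thanks to the uniform noise. Hence if $\hat\lambda$ were an exact minimizer of $H$ we would get exact fairness of $\hat g$ in expectation; the whole game is to quantify how far $\hat\lambda$ is from minimizing $H$. Second, I would bound $\sup_\lambda |H_N(\lambda)-H(\lambda)|$ — but note this must be done carefully because $H_N-H$ is not obviously uniformly small over all of $\mathbb{R}^K$. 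The standard fix is to observe that both $H_N$ and $H$ are piecewise linear / Lipschitz in $\lambda$ with the same slopes bounded by constants depending on $K$ and $\min_s\pi_s$, that minimizers can be restricted to a bounded region (the objective is coercive with a controlled rate, again using $\min_s\pi_s>0$), and that on that bounded region a uniform deviation bound of order $1/\sqrt{N}$ holds by a covering argument combined with Hoeffding/bounded-differences for each fixed $\lambda$ (each summand $\max_k(\hat\pi_s\bar p_k - s\lambda_k)$ is bounded). This yields $H(\hat\lambda)-\min_\lambda H(\lambda) \lesssim 1/\sqrt N$ in expectation.

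Third, I would convert the $O(1/\sqrt N)$ suboptimality of $\hat\lambda$ for the convex function $H$ into an $O(1/\sqrt N)$ bound on $\|\nabla H(\hat\lambda)\|$ (equivalently on the fairness violation of $\hat g$). This is the subtle analytic point: for a general convex function, being near the minimal value does \emph{not} bound the gradient norm. The way around it — and I expect this to be \textbf{the main obstacle} — is to use the specific structure: $H$ is piecewise linear (its graph is a max of finitely many affine functions, since the argmax partition of $\mathcal{X}$ induces finitely many linear pieces only after discretization — more honestly, $H$ is smooth with a Hessian one can lower-bound using Assumption~\ref{ass:continuity}-type densities). Under a local strong-convexity / non-degeneracy estimate for $H$ near its minimizer, $\|\nabla H(\hat\lambda)\|^2 \lesssim H(\hat\lambda)-\min H$, giving the claim. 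Alternatively, and perhaps more robustly, one shows directly that $\nabla H(\hat\lambda)$ is small because $\nabla H_N(\hat\lambda)\ni 0$ (first-order optimality for $H_N$) and $\|\nabla H_N(\hat\lambda)-\nabla H(\hat\lambda)\|$ is itself $O(1/\sqrt N)$ in expectation — this requires a uniform law of large numbers for the \emph{gradients} (i.e.\ for the indicator functions $\one_{\hat g_\lambda(X,s)=k}$ over the class indexed by $\lambda\in\mathbb{R}^K$), whose VC dimension is controlled by $K$; the uniform-entropy bound then gives the $1/\sqrt N$ rate with constant depending only on $K$. Finally, I would add the contribution of $\mathbb{E}|\hat\pi_s-\pi_s|\le 1/(2\sqrt{N_s})$ and note $N_s\gtrsim N\min_s\pi_s$ with overwhelming probability (or handle the rare event $N_s$ small separately, at negligible cost), absorb all constants into a single $C=C(K,\min_s\pi_s)$, and take expectations to conclude $\mathbb{E}[\mathcal{U}(\hat g)]\le C/\sqrt N$. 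The distribution-freeness is automatic: no step used any regularity of $\mathbb{P}$ or accuracy of $\hat p_k$, only boundedness of scores in $[0,1]$ (plus the noise $u$) and $\min_s\pi_s>0$.
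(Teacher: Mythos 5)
Your second route---using $0\in\partial H_N(\hat\lambda)$ (first-order optimality of the empirical criterion) together with a uniform deviation bound between the empirical and population probabilities of the argmax events---is essentially the paper's proof; the paper gets the uniform bound by reducing each argmax event to threshold events on $\bar p_k-\bar p_j$ and applying Dvoretzky--Kiefer--Wolfowitz conditionally, rather than your VC/covering argument, and you are right to discard your first route (value suboptimality plus strong convexity), which the paper never needs. The one detail you gloss over is that the uniform noise only kills ties under the \emph{population} measure: at the data-dependent $\hat\lambda$ the empirical objective can sit exactly at a kink, so the zero element of $\partial H_N(\hat\lambda)$ mixes in the empirical mass of tie events, and the paper devotes a separate pigeonhole lemma (Lemma~\ref{lm:PigeonHole}) to showing this mass is $O(1/N_s)$ in expectation; your argument needs the same lemma to pass from the subgradient identity to a bound on the empirical fairness violation.
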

This first finite-sample bound on the fairness illustrates a key feature of our post-processing approach. It makes (asymptotically) $\varepsilon$-fair any off-the-shelf (unconstrained/unfair) estimators of the conditional probabilities. This post-processing step is especially appealing when the cost of re-training an existing learning algorithm is high.
While the former result provides a control of the unfairness on our algorithm in expectation, it is also appealing to have a thinner analysis of the unfairness through a high probability control.
\begin{theo}
\label{thm:caracEpsfairEstimator}
Let $0 < \delta < 1$ and define $C_{\delta} = 4
 K \sqrt{ 2  \log(\frac{4K}{\delta}) }
 $. Assume that $\varepsilon > \frac{\sqrt{2}C_{\delta}}{\sqrt{\pi_{\min}N}}$
and that $N \geq 2\dfrac{\log(1/\delta)}{\pi^2_{\min}}$. Then there exists an event $\mathcal{A}(\delta)$ that holds with probability $1-(K+2)\delta$ on which we have
\begin{equation*}
\dfrac{C_{\delta}}{\sqrt{N_{\min}}} < \varepsilon, \quad {\rm and} \;\; \forall k \in [K], \;\; \hat{\lambda}^{(1)}_k\hat{\lambda}^{(2)}_k = 0.
\end{equation*}
Besides on $\mathcal{A}(\delta)$, the following holds
\begin{itemize}
\item[1)] either
$\left|\mathcal{U}(\hat{g}_{\varepsilon}) -  \varepsilon \right| \leq  \frac{C_{\delta}}{\sqrt{N_{\min}}}$;
\item[2)] or 
$\mathcal{U}(\hat{g}_{\varepsilon}) <  \varepsilon - \frac{C_{\delta}}{\sqrt{N_{\min}}}$, and then we have $\hat{g} = \hat{g}_\varepsilon$ (for each $k \in [K]$, $\hat{\lambda}^{(1)}_k = \hat{\lambda}^{(2)}_k = 0$).
\end{itemize}
\end{theo}
This result has several levels of understanding. It highlights that the bound on the unfairness established in Theorem~\ref{thm:unfairness} is also valid with high probability, that is, there exists some constant $C>0$ such that 
$\mathcal{U}(\hat{g}_{\varepsilon})\leq \varepsilon + \frac{C}{\sqrt{ N_{\min}}} $ with high probability.
However, this result covers two  significantly different situations for $\hat{g}_\varepsilon$: the first case is when the unfairness of $\hat{g}_{\varepsilon}$ is small {\it w.r.t.} to $\varepsilon$. This means that the unconstrained classifier $ \hat{g}$ is already $\varepsilon$-fair and the action of the fairness constraint on our prediction function is null. In this case, we have $\hat{g}_{\varepsilon}= \hat{g}$. 
The second case, which is also the most expected one, is when at least one coordinate of the Lagrangian is non zero ({\it e.g.} either $\hat{\lambda}^{(1)}_k$ or $ \hat{\lambda}^{(2)}_k $ is non zero for some~$k$). Here, imposing the fairness constraint is relevant and the unfairness of $\hat{g}_{\varepsilon}$ falls within a small interval around $\varepsilon$.

From another perspective, all these conclusions are valid under some conditions on the desired level of unfairness $\varepsilon$ and the sample size $N$.
It is assumed that $N$ is large enough to make the fairness constraint meaningful.
However, it could be interesting to consider the case where $\varepsilon$ is smaller than the rate $\frac{1}{\sqrt{\pi_{\min} N}}$. (Observe that $\pi_{\min} N$ is the expectation of $N_{\min}$.)
In this case, our statements shows that all values of $\varepsilon \in [0, \frac{1}{\sqrt{\pi_{\min} N}}]$
%The whole range of values of $\varepsilon$ between these two 
lead, from the theoretical perspective, to the same bound on the unfairness of the resulting classifier.

\subsubsection{Consistency result}

In this part, we provide a control on the misclassification risk of $\hat{g}_{\varepsilon}$. 
Let us define the $\ell_1$-norm in $\mathbb{R}^K$ between the estimator $\mathbf{\hat{p}} := (\hat{p}_1,\ldots,\hat{p}_K) $ and the vector of the conditional probabilities $\mathbf{p} := (p_1,\ldots,p_K)$ by
 $
\|\mathbf{\hat{p}}(X,S)- \mathbf{p}(X,S)\|_1 = \sum_{k\in[K]} |\hat{p}_k(X,S) - p_k(X,S)|.$
We then derive the following bound.
\begin{theo}
\label{thm:excessRisk}
Let Assumption~\ref{ass:continuity} be satisfied. Assume that $\dfrac{N}{\log(N)} \geq 2 \pi_{\min}^{-2}$, then it holds that
\begin{equation*}
\begin{aligned}
 \mathbb{E}  [\mathcal{R}_{\lambda^{*(1)},\lambda^{*(2)}}(\hat{g}_{\varepsilon}) ] - \mathcal{R}_{\lambda^{*(1)},\lambda^{*(2)}}(g^*_{\varepsilon-{\rm fair}}) 
%\mathbb{E} & \left[\mathcal{E}_{{\rm fair}}(\hat{g})\right]
\leq 
C\left(\mathbb{E}\left[\|\mathbf{\hat{p}}(X,S)- \mathbf{p}(X,S)\|_1\right] + \sum_{s \in \mathcal{S}} \mathbb{E}\left[ |\hat{\pi}_s-\pi_s| \right]  + \dfrac{\log(N)}{\sqrt{N}}+ u \right)   \enspace ,
\end{aligned}
\end{equation*}
where $C > 0$ depends on $K$ and $\pi_{\min}$.
\end{theo}
This result highlights that the excess fair-risk of $\hat{g}$ depends on 
i) the $L_1$-risk of $\mathbf{\hat{p}}$ for estimating the conditional probabilities; 
ii) the efficiency of the estimators  $(\hat\pi_s)_{s\in\mathcal{S}}$; 
iii) a bound on the unfairness of the classifier; and 
iv) the upper-bound $u$ on the regularizing perturbations. 
In view of Theorem~\ref{thm:caracEpsfairEstimator}, 
$\hat{g}_{\varepsilon}$ is consistent {\it w.r.t.} the misclassification risk as soon as the estimator $\mathbf{\hat{p}}$ is consistent in $L_1$-norm.
In particular, we can establish the following result.
\begin{coro}
\label{corr:consist}
Let $\varepsilon \geq 0$, if $\mathbb{E}\left[\|\mathbf{\hat{p}}(X,S)- \mathbf{p}(X,S)\|_1\right]  \to 0$ and $u = u_{n}  \to 0$ when $n \to \infty$, we have
\begin{equation*}
 \vert \mathbb{E}\left[\mathcal{R}(\hat{g}_{\varepsilon})\right] -  \mathcal{R}(g^*_{\varepsilon-{\rm fair}}) \vert
 \to 0, \qquad \text{as}\quad  n,N  \to \infty   \enspace.
\end{equation*}
\end{coro}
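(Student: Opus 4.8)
The plan is to reduce the statement to an assembly of the three results already proved. The driving fact is that, by Proposition~\ref{prop:equivalence}, $g_{\rm fair}^*$ minimizes $\mathcal{R}_{\lambda^*}$ over all of $\mathcal{G}$ and $\mathcal{R}_{\lambda^*}(g_{\rm fair}^*)=\mathcal{R}(g_{\rm fair}^*)$. The only genuine work is to convert the control on the excess \emph{fair}-risk provided by Theorem~\ref{thm:excessRisk} into a control on the excess misclassification risk, which amounts to bounding the Lagrangian penalty term evaluated at $\hat g$.

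First I would write the decomposition
\begin{equation*}
\mathbb{E}[\mathcal{R}(\hat g)] - \mathcal{R}(g_{\rm fair}^*)
= \big(\mathbb{E}[\mathcal{R}(\hat g)] - \mathbb{E}[\mathcal{R}_{\lambda^*}(\hat g)]\big)
+ \big(\mathbb{E}[\mathcal{R}_{\lambda^*}(\hat g)] - \mathcal{R}_{\lambda^*}(g_{\rm fair}^*)\big),
\end{equation*}
where I already used $\mathcal{R}_{\lambda^*}(g_{\rm fair}^*)=\mathcal{R}(g_{\rm fair}^*)$. For the first bracket, I work conditionally on $\mathcal{D}_n$, $\mathcal{D}'_N$ and the perturbations $(\zeta_k,\zeta^s_{k,i})$, so that $\hat g$ is a fixed classifier; then the definition~\eqref{eq_R_lambda} of $\mathcal{R}_\lambda$, the triangle inequality and Definition~\ref{def:unfairnessMeasure} give
\begin{equation*}
\big|\mathcal{R}_{\lambda^*}(\hat g) - \mathcal{R}(\hat g)\big|
\leq \sum_{k=1}^K |\lambda^*_k|\,\big|\mathbb{P}(\hat g(X,S)=k\mid S=1) - \mathbb{P}(\hat g(X,S)=k\mid S=-1)\big|
\leq K\,\|\lambda^*\|_\infty\,\mathcal{U}(\hat g),
\end{equation*}
hence $\big|\mathbb{E}[\mathcal{R}_{\lambda^*}(\hat g)] - \mathbb{E}[\mathcal{R}(\hat g)]\big|\leq K\|\lambda^*\|_\infty\,\mathbb{E}[\mathcal{U}(\hat g)]$; here one records that $\lambda^*$, being a fixed minimizer, is finite, so $\|\lambda^*\|_\infty<\infty$. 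For the second bracket, Proposition~\ref{prop:equivalence} gives the lower bound $0$ and Theorem~\ref{thm:excessRisk} gives the upper bound $C(\mathbb{E}[\|\mathbf{\hat p}-\mathbf p\|_1]+\sum_{s}\mathbb{E}[|\hat\pi_s-\pi_s|]+\mathbb{E}[\mathcal{U}(\hat g)]+u)$.

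Combining the two brackets yields a two-sided estimate
\begin{equation*}
\big|\mathbb{E}[\mathcal{R}(\hat g)] - \mathcal{R}(g_{\rm fair}^*)\big|
\leq C'\Big(\mathbb{E}[\|\mathbf{\hat p}-\mathbf p\|_1] + \sum_{s\in\mathcal{S}}\mathbb{E}[|\hat\pi_s-\pi_s|] + \mathbb{E}[\mathcal{U}(\hat g)] + u\Big),
\end{equation*}
and it remains to send each term to $0$: the first term and $u=u_n$ vanish by hypothesis; $\mathbb{E}[\mathcal{U}(\hat g)]\le C/\sqrt N\to0$ by Theorem~\ref{thm:unfairness}; and since $N\hat\pi_s$ is $\mathrm{Binomial}(N,\pi_s)$, Jensen's inequality gives $\mathbb{E}[|\hat\pi_s-\pi_s|]\le\sqrt{\pi_s(1-\pi_s)/N}\to0$. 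Letting $n,N\to\infty$ concludes. I do not anticipate a real obstacle here: the single delicate point is the transfer from $\mathcal{R}_{\lambda^*}$ to $\mathcal{R}$, which hinges on $\lambda^*$ being finite and on reading the conditional probabilities appearing in $\mathcal{R}_{\lambda^*}(\hat g)$ and in $\mathcal{U}(\hat g)$ consistently (conditionally on the data and the perturbations used to build $\hat g$, with a fresh test point $(X,S,Y)$); everything else is a direct concatenation of results established earlier together with the elementary variance bound for the empirical frequencies $\hat\pi_s$.
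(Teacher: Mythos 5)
Your proof is correct and follows essentially the route the paper intends (the paper leaves this corollary without an explicit written proof, presenting it as a direct consequence of Theorems~\ref{thm:unfairness} and~\ref{thm:excessRisk}): you transfer from the fair-risk $\mathcal{R}_{\lambda^*}$ to the misclassification risk $\mathcal{R}$ via the boundedness of $\lambda^*$ (Lemma~\ref{lm:boundedlambda}) and the vanishing unfairness, then invoke the excess fair-risk bound together with the elementary binomial estimate for $\mathbb{E}[|\hat{\pi}_s-\pi_s|]$. Your write-up supplies exactly the details the paper omits, including the two-sided control needed for the absolute value.
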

Theorem~\ref{thm:unfairness} and Corrolary~\ref{corr:consist} directly imply that $\hat{g}_{\varepsilon}$ performs asymptotically as well as $g^*_{\varepsilon-{\rm fair}}$ both in terms of  fairness and accuracy provided that the estimators of $p_k$ are consistent {\it w.r.t.} the $L_1$ risk.

\subsubsection{Rates of convergence}

This section is dedicated to the study of rates of convergence {\it w.r.t} the excess fair-risk.
To this end, we require additional assumptions on the regression functions $p_k$.
\begin{ass}(Smoothness assumption)
\label{ass:smoothness}
For all $k \in [K]$,  the regression function $p_k$ is Lipschitz.
\end{ass}
The bound on the excess-risk provided in Theorem~\ref{thm:excessRisk}
depends on $\mathbb{E}\left[\|\mathbf{\hat{p}}(X,S)- \mathbf{p} (X,S)\|_1\right] $. Imposing additional regularity constraint on $\mathbf{p}$, this term can further be controlled. 
For instance, if we assume that for each $k \in [K]$, the regression functions $p_k$ are Lipschitz then well established nonparametric estimators of $p_k$, such as local polynomials or kernel based methods, lead to
\begin{equation*}
\mathbb{E}\left[\|\mathbf{\hat{p}}(X,S)- \mathbf{p}(X,S)\|_1\right] 
\leq
Cn^{-1/(2+d)}\enspace. 
\end{equation*}
In this case a straightforward consequence of Theorem~\ref{thm:excessRisk} is that for $u\leq n^{-1/(2+d)}$
\begin{equation*}
\begin{aligned}
 \mathbb{E}  [\mathcal{R}_{\lambda^{*(1)},\lambda^{*(2)}}(\hat{g}_{\varepsilon}) ] - \mathcal{R}_{\lambda^{*(1)},\lambda^{*(2)}}(g^*_{\varepsilon-{\rm fair}}) 
%\mathbb{E} & \left[\mathcal{E}_{{\rm fair}}(\hat{g})\right]
\leq 
C\left(n^{-1/(2+d)} \bigvee N^{-1/2}\right)   \enspace .
\end{aligned}
\end{equation*}
In particular, if $N$ is sufficiently large, that is $N^{-1/2} =  \smallO\left( n^{-1/(2+d)} \right)$, the obtained rates is of the same order as the minimax rates in classification setting without fairness constraint~\cite{Audibert_Tsybakov07}.
Interestingly, it is possible to obtain faster rates under a stronger assumption than Assumption~\ref{ass:continuity}.
\begin{ass}(Density assumption)
\label{ass:denistyAss}
For any $k,j  \in[K]$ and $s\in \mathcal{S}$, we assume that conditional on $S = s$, the random variable
$p_k(X,S)-p_j(X,S)$ admits a bounded density.
\end{ass}
Note that under Assumption~\ref{ass:denistyAss}, the Tsybakov's margin condition is satisfied with parameter $\alpha = 1$. 
Taking advantage of the margin condition, we can establish the following result.
\begin{theo}
\label{thm:fastRates}
For $\varepsilon > 0$ and for a sample size $N$ such that $\dfrac{N}{\log(N)} \geq 2\pi_{\min}^{-2}$,
the following holds
\begin{equation*}
\begin{aligned}
 \mathbb{E}  [\mathcal{R}_{\lambda^{*(1)},\lambda^{*(2)}}(\hat{g}_{\varepsilon}) ] - \mathcal{R}_{\lambda^{*(1)},\lambda^{*(2)}}(g^*_{\varepsilon-{\rm fair}}) 
\leq 
C\left(\mathbb{E}\left[\|\mathbf{\hat{p}}(X,S)- \mathbf{p}(X,S)\|^2_{\infty}\right] +  \dfrac{\log^2(N)}{N}+ u^2\right)   \enspace ,
\end{aligned}
\end{equation*}
where $C > 0$ depends on $K$ and $\pi_{\min}$. 
\end{theo}
The major consequence of the above result is that fast rates of convergence (faster than $n^{-1/2}$) can be obtained for the excess fair-risk. Specifically, 
if under Assumption~\ref{ass:smoothness}, the estimator satisfies
\begin{equation}
\label{eq:eqCveinfty}
\mathbb{E}\left[\left\|\hat{\bf{p}}(X,S)-{\bf p}(X,S) \right\|_{\infty}\right] \leq C\log(n)n^{-1/(2+d)}\enspace, 
\end{equation}
(which is again the case for popular methods) under Assumption~\ref{ass:denistyAss}, it holds that
\begin{equation*}
\begin{aligned}
 \mathbb{E}  [\mathcal{R}_{\lambda^{*(1)},\lambda^{*(2)}}(\hat{g}_{\varepsilon}) ] - \mathcal{R}_{\lambda^{*(1)},\lambda^{*(2)}}(g^*_{\varepsilon-{\rm fair}}) 
%\mathbb{E} & \left[\mathcal{E}_{{\rm fair}}(\hat{g})\right]
\leq 
C\log^2(n) \left(n^{-2/(2+d)} \bigvee N^{-1} \right)   \enspace .
\end{aligned}
\end{equation*}
Interestingly, if the size of the unlabeled sample $N$ is sufficiently large ($N \geq \log(n)^{-2}n^{2/(2+d)}$), then up to a logarithmic factor the established rates of convergence is of the same order as the minimax {\it fast} rates of convergence for plug-in classifiers (see~\cite{Audibert_Tsybakov07}) in supervised classification without fairness constraint.   
Hence, we manage to show that fast rates can be also achieved in the algorithmic fairness framework under Margin type assumption. Note that the condition required in Equation~\eqref{eq:eqCveinfty} is, for instance fulfilled by local polynomial estimator or $k$NN classifiers under Assumption~\ref{ass:smoothness}.
Finally, we also want to point out that we restrict our analysis to the case where the regression functions $p_k$ are Lipschitz to ease the presentation. 
However, we can extend our results to the case where the regression functions are in a H\"older class. 

\section{Numerical Evaluation}
\label{sec:Evaluation}

We now evaluate our method numerically\footnote{The source of our method can be found at \url{https://github.com/curiousML/epsilon-fairness}.}. Section~\ref{subsec:Evaluation} illustrates the efficiency of the $\varepsilon$-fairness algorithm on synthetic data, while experiments on real datasets are provided in Section~\ref{subsec:RealData}.
Up to our knowledge, imposing the fairness constraint in multi-class classification in a model-agnostic post-processing approach is only addressed in~\citep{alghamdi2022beyond}. Therefore we will mainly compare our method to~\citep{alghamdi2022beyond} for multi-class tasks and to the state-of-the-art in-processing approach~\citep{agarwal2019fair} that is designed for binary tasks.

\subsection{Implementation of the algorithm}\label{sec:Implementation}

Let us focus on the implementation of the algorithm producing an $\varepsilon$-fairness classifier. Although the exact fairness setting allows for improvements using accelerated gradient descent, we do not focus on this point and simply identify the exact fair algorithm to the approximate fair one with $\varepsilon=0$.

The proposed approximate fair algorithm is defined in Eq.~\eqref{eq:eqPlugInEps} and requires to solve an optimization problem in Eq.~\eqref{eq:LamPlugInEps}. The implementation--pseudo-code is provided 
in Algorithm~\ref{alg:optimization}. 

\begin{algorithm}
   \caption{$\varepsilon$-fairness calibration}
   \label{alg:optimization}
\begin{algorithmic}
   \STATE {\bfseries Input:}  Approximate fairness parameter $\varepsilon$, new data point $({x}, {s})$, base estimators $(\bar{p}_k)_k$, unlabeled sample $\mathcal{D}'_N$, $(\zeta_{k})_k$ and  \emph{i.i.d} uniform perturbations $(\zeta_{k,i}^{s})_{k,i,s}$ in $[0,10^{-5}]$.
   \STATE {\bf \quad Step 0}. Split $\mathcal{D}'_N$ and construct the samples $(S_1,\ldots,S_N)$ and $\{X_1^s,\ldots,X_{N_s}^s \}$, for $s\in \mathcal{S}$;
   \STATE {\bf \quad Step 1.} Compute the empirical frequencies $(\hat{\pi}_s)_s$ based on $(S_1,\ldots,S_N)$;
   \STATE {\bf \quad Step 2.} %Add Beta hat
   Compute $\hat{\lambda}^{(1)} = (\hat{\lambda}^{(1)}_1,\ldots, \hat{\lambda}^{(1)}_K)$ and $\hat{\lambda}^{(2)} = (\hat{\lambda}^{(2)}_1,\ldots, \hat{\lambda}^{(2)}_K)$ as a solution of Eq.~\eqref{eq:LamPlugInEps};  \\  \hfill \texttt{Sequential quadratic programming of Section~\ref{sec:Implementation} can be used for this step.} 
   \STATE {\bf \quad Step 3.} Compute $\hat{g}$ thanks to Eq.~\eqref{eq:eqPlugInEps};
   \STATE {\bfseries Output:} $\varepsilon$-fair classification $\hat{g}(x,s)$ at point $(x,s)$. 
\end{algorithmic}
\end{algorithm}
First of all, base estimators $ (\bar{p}_k)_k$ are needed as inputs of the algorithm.
%In our numerical study, we consider Random Forest (RF), SVM, and logistic regression (reglog). 
We emphasize that we can fit any off-the-shelf estimators on the labeled dataset $\mathcal{D}_n$. In particular, one can use efficient ML algorithms that are already pre-trained and that are eventually expensive to re-train. This is one of the main advantages of post-training approaches over in-processing ones. In addition, randomization in the definition of $\bar{p}_k$ provides good theoretical properties for fairness calibration (\emph{c.f.} Section~\ref{subsec:consistency}).

Once $ (\bar{p}_k)_k$ are computed, the fair classifier $\hat{g}$ relies on the estimators $\hat \lambda^{(1)}$ and $\hat\lambda^{(2)}$ computed in {\bf Step~2.} of the algorithm. This requires solving the minimization problem in Equation~\eqref{eq:LamPlugInEps}. The corresponding objective function is convex but non-smooth due to the evaluation of the \textit{max} function. We regularize the objective function by replacing the hard-max by a soft-max. %(also known as LogSumExp). 
Namely, for $\beta$ a positive real number designating the temperature parameter and $a = (a_1,\ldots,a_K)^\top \in \mathbb{R}^K$, we set
\begin{eqnarray*}
{\rm{softmax}}(a)  :=  \sum_{k=1}^{K}\sigma_\beta(a)_k \cdot a_k , \quad
 {\rm where} &  \sigma_\beta(a)_k := \frac{\exp\left( a_k/\beta \right)}{\sum_{k=1}^{K} \exp\left( a_k/\beta \right)}\enspace.
\end{eqnarray*}
%where 
%\begin{equation*}
%\sigma_\beta(x)_k := \frac{\exp\left( x_k/\beta \right)}{\sum_{k=1}^{K} \exp\left( x_k/\beta \right)}\enspace.
%\end{equation*}
Whenever $\beta \to 0$, the soft-max reduces to the max function. Problem~\eqref{eq:LamPlugInEps} with the soft-max relaxation is smooth enough to be solved by a constrained optimization method, such as sequential quadratic programming \citep{Fu2019SequentialQP, nie2007sequential}. Empirical study shows that $\beta = 0.005$ enables a good accuracy of the algorithm, without deviating too much from the original solution. 

Instead of regularizing the objective function, one can alternatively use sampling methods such as cross-entropy optimization \citep{rubinstein1999cross} on the original objective function. Despite their precision, the downside of this method is the induced computational complexity, that grows much faster with the dimension than the complexity induced by smoothing techniques. Hence, the regularization approach has been preferred in the following numerical study.

\subsection{Evaluation on synthetic data}
\label{subsec:Evaluation}

Before illustrating our method on real datasets, we choose to evaluate our methodology on synthetic data, in order to better understand its performance.

\subsubsection{Synthetic data} 
\label{subsubsec:SyntheticData}

Let us define the synthetic data $(X, S, Y)$. For all $ k\in [K]$ we set $\mathbb{P}(Y = k) = 1/K$. Conditional on $Y = k$, features $X\in \mathbf{R}^d$ follow a Gaussian mixture of $m$ components:  
$$(X | Y  =  k)  \sim   \frac{1}{m}\sum_{i=1}^{m}\mathcal{N}_d(c^k + \mu_i^k, I_d)$$ 
with $c^k \sim \mathcal{U}_d(-1, 1)$, and 
$\mu_1^k, \dots, \mu_m^k \sim \mathcal{N}_d(0, I_d);$ 
while the sensitive feature $S\in \{-1, +1\}$ follows a Bernoulli {\it contamination} with parameter $p$ or $1-p$ depending on $k$: 
$$
(S | Y = k ) \sim   2 \cdot \mathcal{B}(p) -1 \quad \text{ if } k \leq \left\lfloor K/2\right\rfloor
\quad\quad \text{and} \quad\quad
(S | Y = k )
  \sim  2 \cdot \mathcal{B}(1-p) -1 \quad \text{if } k > \left\lfloor K/2\right\rfloor.$$
From this model, we can deduce an expression of the Bayes classifier $g^*$. Indeed for each $k \in [K]$, since conditional on $Y=k$, the random variables $X$ and $S$ are independent and $\mathbb{P}(Y=k) = 1/K$, we have from the Bayes formula
\begin{equation*}
p_k(x,s) = \dfrac{f_{X|Y=k}(x) \mathbb{P}(S=s|Y=k)}{\sum_{j=1}^K f_{X|Y=j}(x) \mathbb{P}(S=s|Y=j)}\enspace ,    
\end{equation*}
where $f_{X|Y=k}$ is the density of $X$ conditional on $Y=k$.
In view of the expression of the conditional probabilities $p_k$, the Bayes classifier $g^*$ can be expressed as
\begin{equation*}
 g^* (x,s) \in \arg\max_{k \in [K]} f_{X|Y=k}(x) \mathbb{P}(S=s|Y=k) \enspace .   
\end{equation*}
We exploit the above formula to evaluate the unfairness of $g^*$ {\it w.r.t.} the parameter $p$. Figure~\ref{fig:figUnfairBayes} displays the obtained results. Interestingly,
we see that parameter $p$ measures the historical bias in the dataset.
Hence, this synthetic data structure enables to challenge different aspects of the algorithm. 
%$\pi_{1}$ and $\pi_{-1}$
In particular, the data becomes fair when $p = 0.5$ and completely unfair when $p \in \{0,1\}$ (see also Figure~\ref{fig:syn_data} in Appendix~\ref{sec:add_numres} for an illustration).
As default parameters, we set $K = 6$, $p=0.75$, $m = 10$, and $d = 20$.

\begin{figure}
\begin{center}
\includegraphics[scale=0.5]{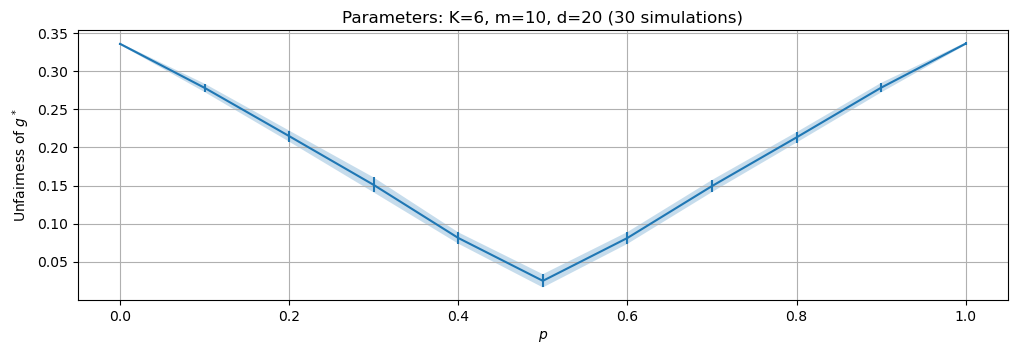}
\caption{Unfairness of the Bayes classifier $g^*$ w.r.t. parameter $p$. We report the means and standard deviations over 30 simulations.}
\label{fig:figUnfairBayes}
\end{center}
\end{figure}

\subsubsection{Simulation scheme} 

We compare our method to the unfair approach. We set $u=10^{-5}$ and estimate the conditional probabilities $p_k$ by Random Forest (RF) with default parameters in \texttt{scikit-learn}. We generate $n = 5000$ synthetic examples and split the data into three sets ($60\%$ training, $20\%$ hold-out and $20\%$ unlabeled). 

The performance of a classifier $g$ is evaluated by its empirical accuracy ${\rm Acc}(g)$ on the hold-out set $\mathcal{T}$
%$\mathcal{T} = \left\{ (X_i, S_i, Y_i) \right\}_{i=1,\dots, m}$.
\begin{equation*}
    Acc(g) = \dfrac{1}{\left|\mathcal{T}\right|}\sum\limits_{(X, S, Y) \in \mathcal{T}} \mathds{1}_{\{g(X, S) = Y\}}\enspace.
\end{equation*}
The unfairness of $g$ is measured on the hold-out set by the empirical counterpart $\hat{\mathcal{U}}(g)$ of the unfairness given in Definition~\ref{def:espUnfairnessMeasure}, that is,
\begin{equation*}
    \hat{\mathcal{U}}(g) = \max_{k \in[K]} \left|
    \hat \nu_{g|-1} (k) - \hat \nu_{g|1} (k)\right|\enspace,
\end{equation*}
where $\hat \nu_{g|s} (k)= \dfrac{1}{|\mathcal{T}^{s}|}\sum\limits_{(X, S, Y) \in \mathcal{T}^{s}}\mathds{1}_{\{g(X, S) = k\}}$ is the empirical distribution of $g(X, S) | S = s$ on the conditional hold-out test $\mathcal{T}^s = \left\{ (X, S, Y)\in \mathcal{T} \; | \; S = s \right\}$.

\subsubsection{Fairness versus Accuracy}

\begin{figure}
\begin{center}
\includegraphics[scale=0.63]{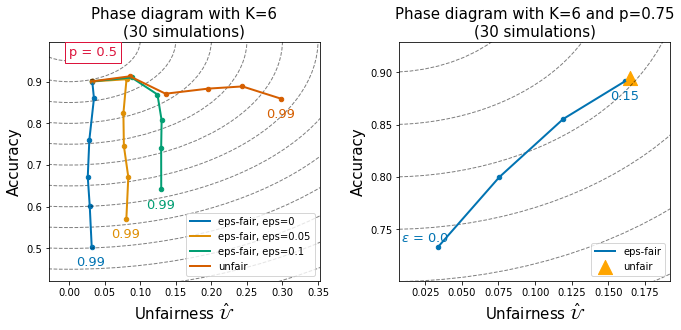}
\caption{(Accuracy, Unfairness) phase diagrams \emph{w.r.t.} \textit{Left} the level of bias $p$ between $0.5$ and $0.99$; \textit{Right} the accuracy-fairness trade-off  parameter $\varepsilon$. Top-left corner gives the best trade-off.}
\label{fig:syn_phase_diagram2}
\end{center}
\end{figure}
Figure~\ref{fig:syn_phase_diagram2}-Left illustrates how fairness and accuracy vary across different levels of unfairness, quantified by $p\in\{0.5, 0.6, 0.7, 0.8, 0.9, 0.99\}$, in both the unfair and fair random forests with $\varepsilon\in\{0, 0.05, 0.1\}$.
Figure~\ref{fig:syn_phase_diagram2}-Right presents the fairness and accuracy of our $\varepsilon$-fairness method for $\varepsilon \in\{0, 0.05, 0.1, 0.15\}$.
Note that the performance evolves as expected: enforcing fairness degrades the accuracy and the trade-off accuracy-fairness is controlled by the parameter $\varepsilon$.
From Figure~\ref{fig:syn_phase_diagram2}-Right, for exact fairness ($\varepsilon=0$), the gain in fairness is particularly salient and effective.
By contrast, whenever $\varepsilon=0.15$, the fair classifier becomes similar to the unfair method, confirming the result in Section~\ref{subsubsec:SyntheticData} that the original unfairness of the problem is around $\varepsilon = 0.15$.
From Figure~\ref{fig:syn_phase_diagram2}-Left, we additionally notice that: 1) the fairness efficiency of the algorithm is particularly significant for datasets with large historical bias ($p=0.9$ or $0.99$); 2) our method succeeds at reaching the required unfairness level up to small approximation terms (vertical curves  as soon as the unfairness bound $\varepsilon$ is reached); 3) as claimed in Theorem~\ref{thm:caracEpsfairEstimator}, when the unconstrained classifier is already $\varepsilon$-fair, the action of the fairness constraint on $\hat{g}_{\varepsilon-{\rm fair}}$ is null and we have $\hat{g}_{\varepsilon-{\rm fair}} = \hat{g}$ (horizontal parts of the curves).
We also illustrates in Figure~\ref{fig:synthetic_distribution} that the distribution of $\hat{g}_{\rm fair}$ is independent from $S$.

\begin{figure}%[H]
\begin{center}
\includegraphics[scale=0.46]{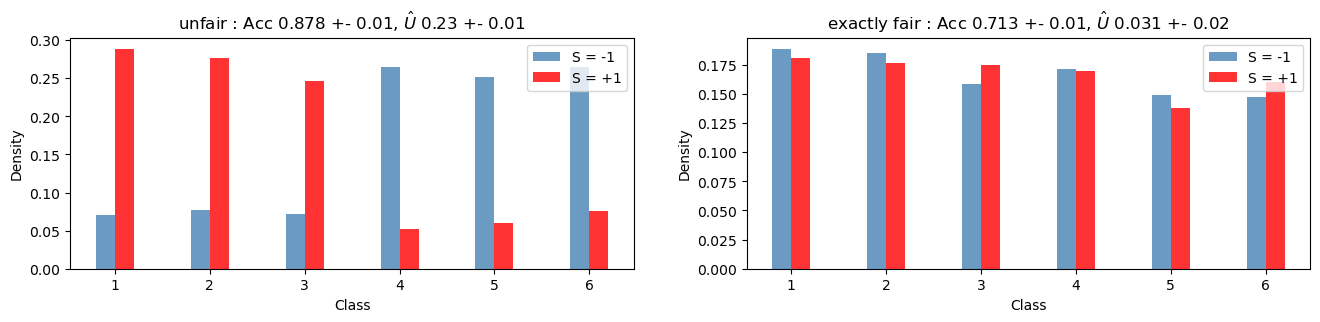}
\caption{Empirical distribution of $\hat{g}$ on 30 simulations. \textit{Left}: unfair classifier; \textit{Right}: exactly-fair classifier.}
\label{fig:synthetic_distribution}
\end{center}
\end{figure}

\paragraph*{Splitting the sample}
When an unlabeled dataset is not available, the samples $\mathcal{D}_n$ and $\mathcal{D}_N'$ follow from splitting the initial dataset, see Remark~\ref{rq:splitting}. Our theoretical study relies strongly on the independence between both datasets $\mathcal{D}_n$ and $\mathcal{D}_N'$. Figure~\ref{fig:splitting_rf} numerically illustrates the importance of such condition for the fairness but also the accuracy of our proposed method. Indeed, whenever the splitting is not performed (left parts of plots), the fairness performance of the fair algorithm may even be worse than the unfair method. This emphasize that splitting is crucial and enables to avoid over-fitting on the training set.

\begin{figure}
\begin{center}
\includegraphics[scale=0.46]{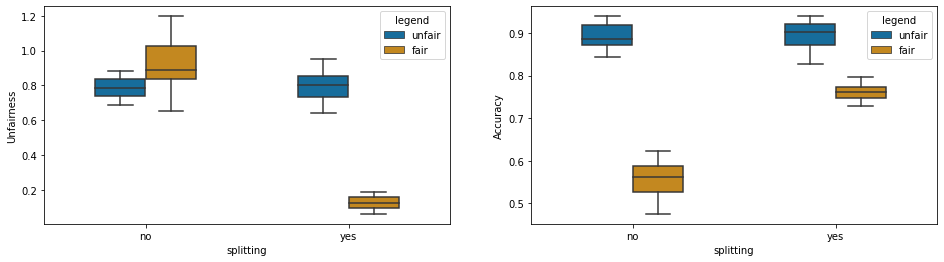}
\caption{Empirical impact of data splitting on unfairness (Left -- the lower the better) and accuracy (Right: accuracy -- the higher the better). Boxplots are generated over $30$ repetitions with $p = 0.75$. 
The non-splitting procedure involves
two sets ($80\%$ training and $20\%$ hold-out): in  this particular case we use the training set (instead of the unlabeled) to compute empirical frequencies $(\hat{\pi}_s)_{s\in\mathcal{S}}$.
}
\label{fig:splitting_rf}
\end{center}
\end{figure}

\subsection{Application to real datasets}
\label{subsec:RealData}

In this section, we illustrate the performance of our methodology on real data and compare it with a benchmark of three State of the Art algorithms \citep{Adversarial19,agarwal2019fair,alghamdi2022beyond}.

\subsubsection{Datasets}
%\label{subsec:CrimeData}

%\paragraph*{Multi-class datsets}

The performance of the method is evaluated on two real datasets : DRUG and CRIME. Hereafter, we provide a short description of these datasets.

\paragraph*{Drug Consumption (DRUG)} This dataset~\cite{fehrman2017five} contains demographic information such as age, gender, and education level, as well as measures of personality traits thought to influence drug use for 1885 respondents. The task is to predict cannabis use, where the 7 levels of drug use have been simplified into $K=4$ categories (never used, not used in the past year, used in the past year, and used in the past day) for multi-class outcomes or $K=2$ categories (used or not used in the past year) for binary outcomes. The binary sensitive feature is education level (college degree or not).

\paragraph*{Communities\&Crime (CRIME)} This dataset contains socio-economic, law enforcement, and crime data about communities in the US with 1994 examples. The task is to predict the number of violent crimes per $10^5$ population which, we divide into $K = 5$ (multi-class outcomes) or $K=2$ (binary outcomes) balanced classes based on equidistant quantiles. Following~\cite{calders2013controlling}, the sensitive feature is a binary variable that corresponds to the ethnicity.

\subsubsection{Methodology} 

We illustrate our $\varepsilon$-fair method\footnote{See \url{https://github.com/curiousML/epsilon-fairness}.} with linear and nonlinear multi-class classification methods. For linear models, we consider one-versus-all logistic regression (reglog); for nonlinear models, Random Forest (RF) and LightGBM (GBM). For reglog, we use the default parameters in scikit-learn.
For RF and GBM, we use a $3$-fold cross-validation random search to select the best hyperparameters with the training set:\\
$\bullet$ For RF, we set the number of trees in $\{10,11, \dots, 200\}$, the maximum depth of each tree in $\{2, 3,  \dots, 16\}$, the minimum number of samples required to split an internal node in $\{2, 3, \dots, 10\}$, and the minimum number of samples required to be at a leaf node in $\{1,\dots, 8\}$;\\
$\bullet$ For GBM, we set the $L1$ and $L2$ regularization term on weights both in $\{0, 0.1, 1, 2, 5, 10, 20, 50\}$, the number of boosted trees in $\{10, 11, \dots, 200\}$, the maximum tree leaves in $\{6, 7, \dots, 50\}$, the maximum depth of each tree in $\{2, 3, \dots, 16\}$, and the minimum number of samples required in a child node for a split to occur in the tree in $\{10, 11, \dots, 100\}$.

\begin{figure}%[H]
\begin{center}
\includegraphics[scale=0.4]{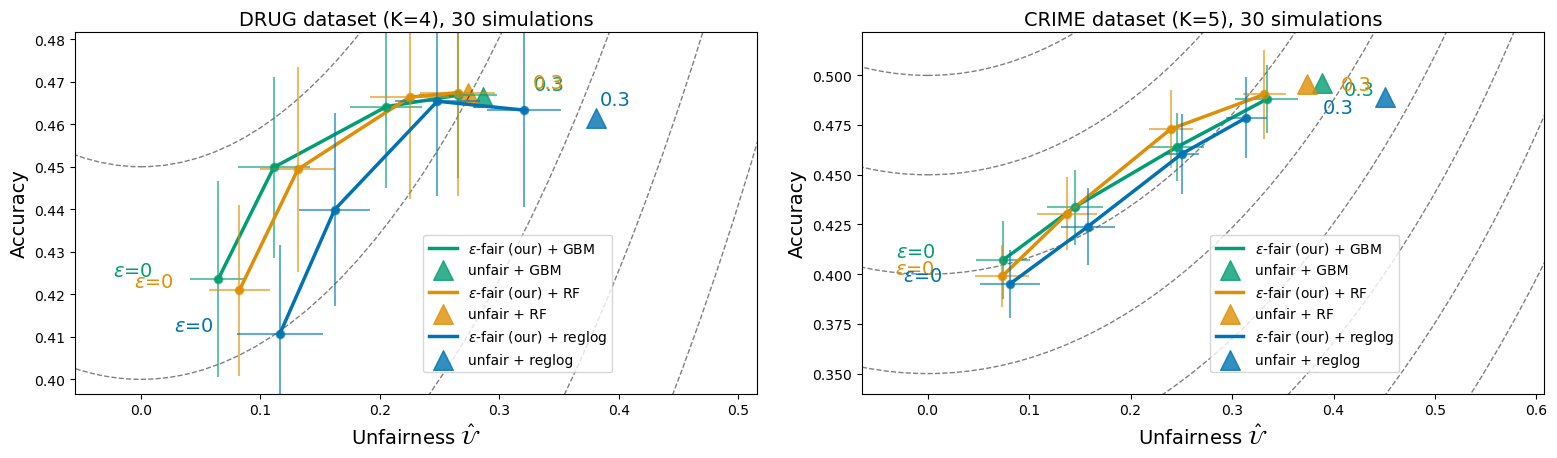}
\caption{(Accuracy, Unfairness) phase diagrams that shows the evolution, w.r.t. the accuracy-fairness trade-off parameter $\varepsilon \in [0, 0.1, 0.2, 0.3]$. We
report the means and standard deviations over the $30$ repetitions. Top-left corner gives the best trade-off. }
\label{fig:2multiclass_real}
\end{center}
\end{figure}

Note that the numerical experiments presented in Figure~\ref{fig:2multiclass_real} confirm our findings on synthetic data. Our method have good performance in term of unfairness while the accuracy slightly increases when the level $\varepsilon$ of desired fairness increases. Besides, the performance of the $\varepsilon$-fair classifier becomes closer to the base (unfair) when the fairness constraint is released.

\subsubsection{Benchmarks}

We aim at highlighting the numerical efficiency of our method in terms of accuracy-fairness trade-off curves. For this purpose, we compare our $\varepsilon$-fairness method to the following benchmarks :

\paragraph*{Fair-learn} For binary classification tasks, the current state-of-the-art is established by the in-processing approach \citep{agarwal2019fair}\footnote{The method in~\citep{agarwal2019fair} was developed for \emph{Equality of Odds} but the code is also implemented for \emph{Demographic Parity} see \url{https://github.com/fairlearn/fairlearn}.}. The authors present a reduction-based algorithm, which is an extention of the Fair-Lasso. The Fair-Lasso algorithm is a variant of the traditional Lasso algorithm that incorporates fairness constraints, aiming at finding a fair solution while maintaining good predictive performance. We use the following trade-off tolerances $[0.0001, 0.5, 1, 2.5, 5, 10]$.

\paragraph*{Fair-adversarial} 
The paper~\cite{Adversarial19}\footnote{We use IBM AIF360 library \url{https://aif360.readthedocs.io/en/stable/modules/algorithms.html}.} presents an in-processing method for reducing bias using adversarial training: a primary model, which is trained to perform a specific task, and a bias correction model, which is trained to reduce the bias in the primary model's predictions. Note that we cannot universally apply this method on any pre-trained classifier. We use a Neural Network (NN) as the base classifier and set the following parameters: \texttt{num\_epochs = 200}, \texttt{batch\_size = 128}, \texttt{classifier\_num\_hidden\_units = 50} (see the python package \texttt{AIF360}). We use the following trade-off tolerances $[0.01, 0.1, 0.5, 0.9, 1]$.

\paragraph*{Fair-projection} For multi-class classification tasks, we compare our result to the recent post-processing approach~\cite{alghamdi2022beyond}\footnote{The code can be found at \url{https://github.com/HsiangHsu/Fair-Projection}.}. The authors propose a method based on information projection by reweighting the outputs of a pre-trained classifier to satisfy specific group-fairness requirements. The trade-off tolerances are $[0, 0.1, 0.2, 0.5, 0.9]$.

\begin{figure}%[H]
\begin{center}
\includegraphics[scale=0.4]{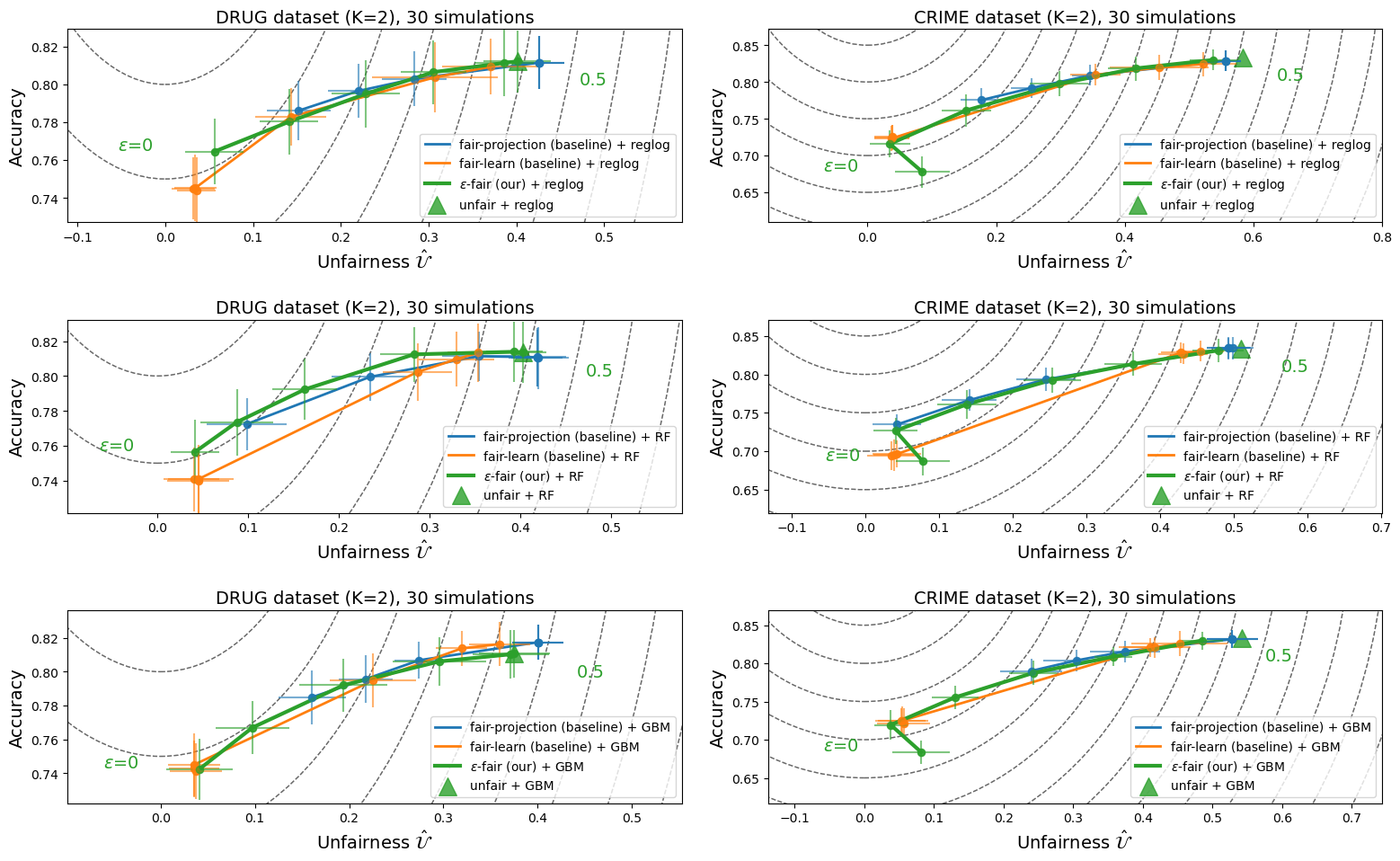}
\caption{(Accuracy, Unfairness) phase diagrams that shows the evolution, \emph{w.r.t.} the accuracy-fairness trade-off tolerances. We
report the means and standard deviations over $30$ repetitions. Top-left corner gives the best trade-off.}
\label{fig:2binary_real_baseline}
\end{center}
\end{figure}

\subsubsection{Results}

\begin{figure}%[H]
\begin{center}
\includegraphics[scale=0.6]{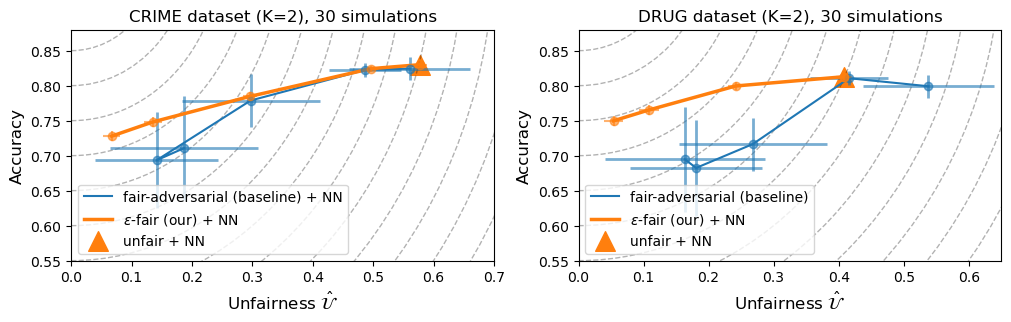}
\caption{(Accuracy, Unfairness) phase diagrams that shows the evolution, \emph{w.r.t.} the accuracy-fairness trade-off tolerances. For $\varepsilon$-fair classifier we vary $\varepsilon\in\{0.01, 0.1, 0.3, 0.5, 0.9\}$. We
report the means and standard deviations over $30$ repetitions. Top-left corner gives the best trade-off.}
\label{fig:binary_crime_adversary}
\end{center}
\end{figure}

\begin{figure}%[H]
\begin{center}
\includegraphics[scale=0.4]{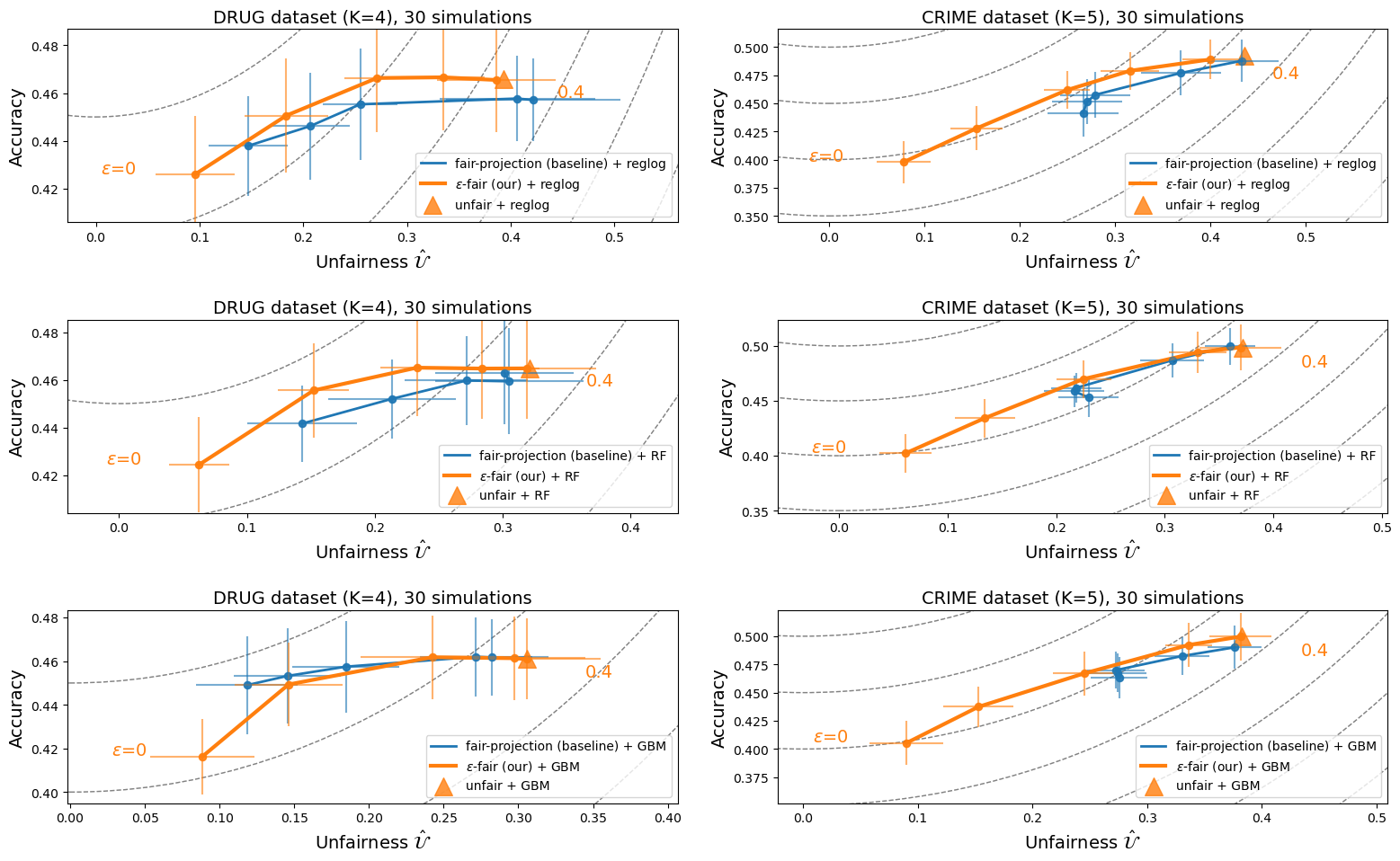}
\caption{(Accuracy, Unfairness) phase diagrams that shows the evolution, \emph{w.r.t.} the accuracy-fairness trade-off tolerances. We
report the means and standard deviations over $30$ repetitions. Top-left corner gives the best trade-off.}
\label{fig:2varmulticlass_real_baseline}
\end{center}
\end{figure}

\paragraph*{Performance in binary case ($K=2$)}
We analyze the efficiency of the $\varepsilon$-fairness method compared to \texttt{fair-learn}, \texttt{fair-projection} and \texttt{fair-adversarial} for binary classification.
Numerical experiments on DRUG and CRIME presented in Figure~\ref{fig:2binary_real_baseline} reveal that our method is very efficient in both accuracy and fairness and at least competitive (if not better) in several aspects :
\begin{enumerate}
    \item \textbf{Competitive fairness.} Overall, our $\varepsilon$-fair classifier outperforms \texttt{fair-projection} classifier in terms of exact fairness ($\varepsilon = 0$) and achieves similar performance as the state-of-the-art benchmark \texttt{fair-learn}.

    \item \textbf{Competitive accuracy.} Although we obtain similar accuracies using reglog and GBM, our algorithm seems more efficient than \texttt{fair-learn} using RF. Compared to \texttt{fair-projection} our algorithm is competitive in terms of accuracy for $\varepsilon \geq 0.1$ in both datasets.
\end{enumerate}
From Figure~\ref{fig:binary_crime_adversary}, our $\varepsilon$-fair predictor outperforms \texttt{fair-adversarial} predictor both in terms of accuracy and fairness.
Note that since \texttt{fair-learn} and \texttt{fair-adversarial} are in-processing methods their running time (using the dedicated package) is much higher than our algorithm.

\paragraph*{Performance in multi-class case ($K\geq3$).} 
We analyze the efficiency of the $\varepsilon$-fairness method compared to the baseline \texttt{fair-projection} for multi-class classification.
The numerical experiments are presented in Figure~\ref{fig:2varmulticlass_real_baseline}.
In multi-class tasks, empirical results highlight the efficiency of our approach to enforce fairness when $\varepsilon$ decreases. Indeed, our methodology achieves better fairness results under the DP constraint than \texttt{fair-projection} while maintaining competitive accuracy. Moreover, our fairness calibration is close to the pre-specified level, regardless of the base algorithm (reglog, RF or GBM).  

Finally, our methodology only use a portion of the dataset to train a classifier, while reserving the remaining portion as unlabeled. Despite using relatively small datasets, consisting of about 1000 examples, our approach performed better than other benchmark methods trained on full labeled datasets.

\section{Conclusion}
\label{sec:conclusion}

In the multi-class classification framework, we provide an optimal fair classification rule under DP constraint and derive misclassification and fairness guarantees of the associated plug-in fair classifier (see Algorithm~\ref{alg:optimization}). We handle both exact and approximate fairness settings and show that our approach achieves distribution-free fairness and can be applied on top of any probabilistic base estimator. We also establish rates of convergence for our procedure.
Up to our knowledge, the present contribution is the first statistical analysis in approximate fairness context. In particular, we consider here the multi-class setting which has rarely been studied.
We finally illustrate the proficiency of our procedure on various synthetic and real datasets. Importantly, our algorithm is efficient for enforcing a pre-specified level of fairness.  
A natural way for further research is to extend our methodology to other notions of fairness such as {\it equalized odds} and also to consider settings of multi-category sensitive attributes. We believe that the present work is a relevant step to handle these two problems.
On the other hand, the calibration of the level of unfairness $\varepsilon\geq 0$ is an important empirical issue. As mentioned in the introduction, there are some heuristics that provide guidelines for its calibration but one may ask for more advanced and robust approaches. In particular, a future direction of research is to describe a methodology that statistically justifies a data-driven calibration of this parameter in order to optimally compromise risk and unfairness.

\bibliography{biblio}

\begin{thebibliography}{39}
\providecommand{\natexlab}[1]{#1}
\providecommand{\url}[1]{\texttt{#1}}
\expandafter\ifx\csname urlstyle\endcsname\relax
  \providecommand{\doi}[1]{doi: #1}\else
  \providecommand{\doi}{doi: \begingroup \urlstyle{rm}\Url}\fi

\bibitem[Adebayo and Kagal(2016)]{adebayo2016iterative}
J.~Adebayo and L.~Kagal.
\newblock Iterative orthogonal feature projection for diagnosing bias in
  black-box models.
\newblock In \emph{Conference on Fairness, Accountability, and Transparency in
  Machine Learning}, 2016.

\bibitem[Agarwal et~al.(2018)Agarwal, Beygelzimer, Dud{\'\i}k, Langford, and
  Wallach]{Agarwal_Beygelzimer_Dubik_Langford_Wallach18}
A.~Agarwal, A.~Beygelzimer, M.~Dud{\'\i}k, J.~Langford, and H.~Wallach.
\newblock A reductions approach to fair classification.
\newblock In \emph{Proceedings of the 35th International Conference on Machine
  Learning}, 2018.

\bibitem[Agarwal et~al.(2019)Agarwal, Dudik, and Wu]{agarwal2019fair}
A.~Agarwal, M.~Dudik, and Z.~S. Wu.
\newblock Fair regression: Quantitative definitions and reduction-based
  algorithms.
\newblock In \emph{International Conference on Machine Learning}, 2019.

\bibitem[Alghamdi et~al.(2022)Alghamdi, Hsu, Jeong, Wang, Michalak, Asoodeh,
  and Calmon]{alghamdi2022beyond}
W.~Alghamdi, H.~Hsu, H.~Jeong, H.~Wang, P.W. Michalak, S.~Asoodeh, and
  F.~Calmon.
\newblock Beyond adult and {COMPAS}: Fair multi-class prediction via
  information projection.
\newblock In \emph{In Neural Information Processing Systems}, 2022.

\bibitem[Audibert and Tsybakov(2007)]{Audibert_Tsybakov07}
J.~Y. Audibert and A.~Tsybakov.
\newblock Fast learning rates for plug-in classifiers.
\newblock \emph{The Annals of Statistics}, 35\penalty0 (2):\penalty0 608--633,
  2007.

\bibitem[Barocas and Selbst(2014)]{barocas2014datas}
S.~Barocas and A.~Selbst.
\newblock {Big Data's Disparate Impact}.
\newblock \emph{SSRN eLibrary}, 2014.

\bibitem[Barocas et~al.(2018)Barocas, Hardt, and
  Narayanan]{barocas-hardt-narayanan}
S.~Barocas, M.~Hardt, and A.~Narayanan.
\newblock \emph{Fairness and Machine Learning}.
\newblock fairmlbook.org, 2018.

\bibitem[Calders et~al.(2009)Calders, Kamiran, and
  Pechenizkiy]{calders2009building}
T.~Calders, F.~Kamiran, and M.~Pechenizkiy.
\newblock Building classifiers with independency constraints.
\newblock In \emph{IEEE international conference on Data mining}, 2009.

\bibitem[Calders et~al.(2013)Calders, Karim, Kamiran, Ali, and
  Zhang]{calders2013controlling}
T.~Calders, A.~Karim, F.~Kamiran, W.~Ali, and X.~Zhang.
\newblock Controlling attribute effect in linear regression.
\newblock In \emph{IEEE International Conference on Data Mining}, 2013.

\bibitem[Calmon et~al.(2017)Calmon, Wei, Vinzamuri, Ramamurthy, and
  Varshney]{calmon2017optimized}
F.~Calmon, D.~Wei, B.~Vinzamuri, K.~N. Ramamurthy, and K.~R. Varshney.
\newblock Optimized pre-processing for discrimination prevention.
\newblock In \emph{Neural Information Processing Systems}, 2017.

\bibitem[Chiappa et~al.(2020)Chiappa, Jiang, Stepleton, Pacchiano, Jiang, and
  Aslanides]{chiappa2020general}
S.~Chiappa, R.~Jiang, T.~Stepleton, A.~Pacchiano, H.~Jiang, and J.~Aslanides.
\newblock A general approach to fairness with optimal transport.
\newblock In \emph{AAAI}, 2020.

\bibitem[Chzhen et~al.(2019)Chzhen, Denis, Hebiri, Oneto, and
  Pontil]{Chzhen_Denis_Hebiri_Oneto_Pontil19}
E.~Chzhen, C.~Denis, M.~Hebiri, L.~Oneto, and M.~Pontil.
\newblock Leveraging labeled and unlabeled data for consistent fair binary
  classification.
\newblock In \emph{Advances in Neural Information Processing Systems}, 2019.

\bibitem[Chzhen et~al.(2020{\natexlab{a}})Chzhen, Denis, Hebiri, Oneto, and
  Pontil]{Chzhen_Denis_Hebiri_Oneto_Pontil20Recali}
E.~Chzhen, C.~Denis, M.~Hebiri, L.~Oneto, and M.~Pontil.
\newblock Fair regression via plug-in estimator and recalibrationwith
  statistical guarantees.
\newblock In \emph{Advances in Neural Information Processing Systems},
  2020{\natexlab{a}}.

\bibitem[Chzhen et~al.(2020{\natexlab{b}})Chzhen, Denis, Hebiri, Oneto, and
  Pontil]{Chzhen_Denis_Hebiri_Oneto_Pontil20TV}
E.~Chzhen, C.~Denis, M.~Hebiri, L.~Oneto, and M.~Pontil.
\newblock {Fair Regression via Plug-In Estimator and Recalibration}.
\newblock NeurIPS20, 2020{\natexlab{b}}.

\bibitem[Collins(2007)]{Collins07}
B.~Collins.
\newblock Tackling unconscious bias in hiring practices: The plight of the
  rooney rule.
\newblock \emph{NYU Law Review}, 82:\penalty0 870--912, 2007.

\bibitem[Dastin(2018)]{dastin2018amazon}
J.~Dastin.
\newblock Amazon scraps secret ai recruiting tool that showed bias against
  women.
\newblock In \emph{Ethics of Data and Analytics}, pages 296--299. Auerbach
  Publications, 2018.

\bibitem[Donini et~al.(2018{\natexlab{a}})Donini, Oneto, Ben-David,
  Shawe-Taylor, and Pontil]{Donini_Oneto_Ben-David_Taylor_Pontil18}
M.~Donini, L.~Oneto, S.~Ben-David, J.~S. Shawe-Taylor, and M.~Pontil.
\newblock Empirical risk minimization under fairness constraints.
\newblock In \emph{Neural Information Processing Systems}, 2018{\natexlab{a}}.

\bibitem[Donini et~al.(2018{\natexlab{b}})Donini, Oneto, Ben-David,
  Shawe-Taylor, and Pontil]{donini2018empirical}
M.~Donini, L.~Oneto, S.~Ben-David, J.~S. Shawe-Taylor, and M.~Pontil.
\newblock Empirical risk minimization under fairness constraints.
\newblock In \emph{Neural Information Processing Systems}, 2018{\natexlab{b}}.

\bibitem[Fehrman et~al.(2017)Fehrman, Muhammad, Mirkes, Egan, and
  Gorban]{fehrman2017five}
E.~Fehrman, A.~Muhammad, E.~Mirkes, V.~Egan, and A.~Gorban.
\newblock The five factor model of personality and evaluation of drug
  consumption risk.
\newblock In \emph{Data science}, pages 231--242. Springer, 2017.

\bibitem[Feldman et~al.(2015)Feldman, Friedler, Moeller, Scheidegger, and
  Venkatasubramanian]{feldman_certifying_2015}
M.~Feldman, S.~Friedler, J.~Moeller, C.~Scheidegger, and S.~Venkatasubramanian.
\newblock Certifying and removing disparate impact.
\newblock In \emph{Proceedings of the 21th {ACM} {SIGKDD} {International}
  {Conference} on {Knowledge} {Discovery} and {Data} {Mining}}, pages 259--268.
  ACM, 2015.

\bibitem[Fu et~al.(2019)Fu, Liu, and Guo]{Fu2019SequentialQP}
Z.~Fu, G.~Liu, and L.~Guo.
\newblock Sequential quadratic programming method for nonlinear least squares
  estimation and its application.
\newblock \emph{Mathematical Problems in Engineering}, 2019.

\bibitem[Gordaliza et~al.(2019)Gordaliza, Del~Barrio, Fabrice, and
  Loubes]{gordaliza2019obtaining}
P.~Gordaliza, E.~Del~Barrio, G.~Fabrice, and J.~M. Loubes.
\newblock Obtaining fairness using optimal transport theory.
\newblock In \emph{International Conference on Machine Learning}, 2019.

\bibitem[Hajian et~al.(2011)Hajian, Domingo-Ferrer, and
  Martínez-Ballesté]{Hajian_2011_discrimation}
S.~Hajian, J.~Domingo-Ferrer, and A.~Martínez-Ballesté.
\newblock Discrimination prevention in data mining for intrusion and crime
  detection.
\newblock In \emph{2011 IEEE Symposium on Computational Intelligence in Cyber
  Security (CICS)}, pages 47--54, 2011.

\bibitem[Hardt et~al.(2016{\natexlab{a}})Hardt, Price, and
  Srebro]{Hardt_Price_Srebro16}
M.~Hardt, E.~Price, and N.~Srebro.
\newblock Equality of opportunity in supervised learning.
\newblock In \emph{Neural Information Processing Systems}, 2016{\natexlab{a}}.

\bibitem[Hardt et~al.(2016{\natexlab{b}})Hardt, Price, and
  Srebro]{hardt2016equality}
M.~Hardt, E.~Price, and N.~Srebro.
\newblock Equality of opportunity in supervised learning.
\newblock In \emph{Neural Information Processing Systems}, 2016{\natexlab{b}}.

\bibitem[Holzer and Holzer(2000)]{HolzerHolzer00}
H.~Holzer and D.~Holzer.
\newblock Assessing affirmative action.
\newblock \emph{Journal of Economic Literature}, 38\penalty0 (3):\penalty0
  483--568, 2000.

\bibitem[Jiang et~al.(2019)Jiang, Pacchiano, Stepleton, Jiang, and
  Chiappa]{jiang2019wasserstein}
R.~Jiang, A.~Pacchiano, T.~Stepleton, H.~Jiang, and S.~Chiappa.
\newblock Wasserstein fair classification.
\newblock \emph{arXiv preprint arXiv:1907.12059}, 2019.

\bibitem[Kamiran et~al.(2013)Kamiran, Zliobaite, and
  Calders]{KamiranZC13RemoveIllegal}
F.~Kamiran, I.~Zliobaite, and T.~Calders.
\newblock Quantifying explainable discrimination and removing illegal
  discrimination in automated decision making.
\newblock \emph{Knowl. Inf. Syst.}, 35\penalty0 (3):\penalty0 613--644, 2013.

\bibitem[{Le Gouic} et~al.(2020){Le Gouic}, Loubes, and
  Rigollet]{gouic2020price}
T.~{Le Gouic}, {J.-M.} Loubes, and P.~Rigollet.
\newblock Projection to fairness in statistical learning.
\newblock \emph{arXiv preprint arXiv:2005.11720}, 2020.

\bibitem[Lum and Johndrow(2016)]{lum2016statistical}
K.~Lum and J.~Johndrow.
\newblock A statistical framework for fair predictive algorithms.
\newblock \emph{arXiv preprint arXiv:1610.08077}, 2016.

\bibitem[Nie(2007)]{nie2007sequential}
P.-y. Nie.
\newblock Sequential penalty quadratic programming filter methods for nonlinear
  programming.
\newblock \emph{Nonlinear Analysis: Real World Applications}, 8\penalty0
  (1):\penalty0 118--129, 2007.

\bibitem[Oneto et~al.(2019)Oneto, Donini, and Pontil]{oneto2019general}
L.~Oneto, M.~Donini, and M.~Pontil.
\newblock General fair empirical risk minimization.
\newblock \emph{arXiv preprint arXiv:1901.10080}, 2019.

\bibitem[Rubinstein(1999)]{rubinstein1999cross}
R.~Rubinstein.
\newblock The cross-entropy method for combinatorial and continuous
  optimization.
\newblock \emph{Methodology and computing in applied probability}, 1\penalty0
  (2):\penalty0 127--190, 1999.

\bibitem[Tavker et~al.(2020)Tavker, Ramaswamy, and Narasimhan]{Tavkeretal2020}
S.K. Tavker, H.G. Ramaswamy, and H.~Narasimhan.
\newblock Consistent plug-in classifiers for complex objectives and
  constraints.
\newblock In \emph{Advances in Neural Information Processing Systems},
  volume~33, pages 20366--20377, 2020.

\bibitem[Ye and Xie(2020)]{Ye_Xie_2020_fairness}
Q.~Ye and W.~Xie.
\newblock Unbiased subdata selection for fair classification: A unified
  framework and scalable algorithms.
\newblock \emph{arXiv preprint arXiv:2012.12356}, 2020.

\bibitem[Zafar et~al.(2017)Zafar, Valera, Gomez~Rodriguez, and
  Gummadi]{zafar2017fairness}
M.~B. Zafar, I.~Valera, M.~Gomez~Rodriguez, and K.~P. Gummadi.
\newblock Fairness beyond disparate treatment \& disparate impact: Learning
  classification without disparate mistreatment.
\newblock In \emph{International Conference on World Wide Web}, 2017.

\bibitem[Zafar et~al.(2019)Zafar, Valera, Gomez-Rodriguez, and
  Gummadi]{zafar2019fairness}
M.~B. Zafar, I.~Valera, M.~Gomez-Rodriguez, and K.~P. Gummadi.
\newblock Fairness constraints: A flexible approach for fair classification.
\newblock \emph{Journal of Machine Learning Research}, 20\penalty0
  (75):\penalty0 1--42, 2019.

\bibitem[Zemel et~al.(2013)Zemel, Wu, Swersky, Pitassi, and
  Dwork]{zemel2013learning}
R.~Zemel, Y.~Wu, K.~Swersky, T.~Pitassi, and C.~Dwork.
\newblock Learning fair representations.
\newblock In \emph{International Conference on Machine Learning}, 2013.

\bibitem[Zhang et~al.(2018)Zhang, Lemoine, and Mitchell]{Adversarial19}
B.~Hu Zhang, B.~Lemoine, and M.~Mitchell.
\newblock Mitigating unwanted biases with adversarial learning.
\newblock \emph{CoRR}, abs/1801.07593, 2018.

\end{thebibliography}

\newpage

\begin{center}

\Large{\bf Appendix}
\end{center}
\vspace*{0.25cm}

\appendix

In this section, we gather the proofs of our results. Section~\ref{sec:techResult} is devoted to useful technical results. 
In Section~\ref{sec:proofofSec2} we give the proof of the results related to the optimal fair predictors while Section~\ref{sec:proofofSec3} is dedicated to the theoretical properties of our estimation procedure.
Finally, we provide additional numerical results in Section~\ref{sec:add_numres}.
In all the sequel, $C$ denotes a generic constant, whose value may vary from line to line.

\section{Technical results}
\label{sec:techResult}

\begin{lem}[Hoeffding]
\label{lm:Hoeffding}
Let $Z\sim \mathcal{B}(N,p)$, with $p \in (0,1)$. We then have for all $t >0$ and $N > \frac{t}{p}$  
\begin{equation*}
\mathbb{P}(Z \leq t) \leq \exp{\left( -2N(p-t/N)^2\right)} .
\end{equation*}
\end{lem}

\begin{lem}
\label{lm:InverseBinom}
Let $Z\sim \mathcal{B}(N,p)$. We have that
\begin{equation*}
\mathbb{E}\left[\dfrac{\one_{\{Z \geq 1\}}}{Z}\right] \leq \dfrac{2}{(N+1)p}
\end{equation*}
\end{lem}

\begin{prop}
\label{prop:subgradMin}
Let $f: \; \mathbb{R}^M \rightarrow \mathbb{R}$ be a convex continuous function, and $\mathcal{H} \subset \mathbb{R}^M$ a closed convex set.
We consider the minimizer ${\bf x}^*$ of the function $f$ over the set $\mathcal{H}$
\begin{equation*}
{\bf x}^* \in \arg\min_{{\bf x} \in \mathcal{H}} f({\bf x}).
\end{equation*}
Then, there exists a subgradient $\mathfrak{h}$ in the subdifferential $\partial{f}({\bf x}^*)$ of $f$ at the point ${\bf x}^*$ such that
\begin{equation*}
\mathfrak{h}^T({\bf y}-{\bf x}^*) \geq 0, \;\; \forall {\bf y} \in \mathcal{H}.
\end{equation*}
\end{prop}
From the above proposition, it is easy to show the following result.
\begin{coro}
\label{coro:subgradMinInt}
Let $f: \; \mathbb{R}^M \rightarrow \mathbb{R}$ be a convex continuous function. Let $\mathcal{H} = \mathbb{R}_{+}^M$. We consider the minimizer ${\bf x}^*$ of the function $f$ over the set $\mathcal{H}$. Let $\mathcal{M} := \{m \in [M], \; {\bf x}_m^* \neq 0\}$. Then there exists a subgradient 
 $\mathfrak{h}\in \partial{f}({\bf x}^*)$, such that for all $m \in [M]$ we have $\mathfrak{h}_m \geq 0$ and in particular,
\begin{equation*}
\forall  m \in \mathcal{M}, \;\; \mathfrak{h}_m = 0.
\end{equation*}
\end{coro}

\section{Proof of Section~\ref{sec:FMCApprox}}
\label{sec:proofofSec2}

We begin with an auxiliary lemma, which provides an alternative useful representation of $\mathcal{R}_{\lambda^{(1)},\lambda^{(2)}}(g)$.

\begin{lem}
\label{lem:eqDecompLambdaEps}
Let $\varepsilon \geq 0$,
the $\varepsilon$-fair-risk of a classifier $g$ with tuning parameters $\lambda^{(1)} = (\lambda^{(1)}_1,\ldots, \lambda^{(1)}_K)\in\mathbb{R}_{+}^K, \lambda^{(2)} = (\lambda^{(2)}_1, \ldots, \lambda^{(2)}_K) \in \mathbb{R}_{+}^K$ reads as:
 \begin{equation}
\label{eq:eqDecompLambdaEps}
\mathcal{R}_{\lambda^{(1)}, \lambda^{(2)}}(g) = \sum_{s \in \mathcal{S}}\mathbb{E}_{X|S=s}\left[\sum_{k=1}^K \left(\pi_s p_k(X,S)-s(\lambda^{(1)}_k-\lambda^{(2)}_k)\right)\one_{\{g(X,S) \neq k\}}\right] - \varepsilon \sum_{k=1}^K(\lambda^{(1)}_k+\lambda^{(2)}_k).    
\end{equation}
\end{lem}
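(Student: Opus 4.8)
The plan is to mirror exactly the computation carried out for Lemma~\ref{lem:eqDecompLambda}, since the $\varepsilon$-fair-risk $\mathcal{R}_{\lambda^{(1)},\lambda^{(2)}}(g)$ differs from $\mathcal{R}_{\lambda}(g)$ only through the substitution $\lambda_k \rightsquigarrow \lambda^{(1)}_k - \lambda^{(2)}_k$ in the penalty terms and the extra additive constant $-\varepsilon\sum_k(\lambda^{(1)}_k+\lambda^{(2)}_k)$. So the first step is bookkeeping: starting from the definition~\eqref{eq_R_lambda} of $\mathcal{R}_{\lambda^{(1)},\lambda^{(2)}}(g)$, I would regroup the two penalty sums. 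Writing $\mathbb{P}(g(X,S)=k\mid S=s) = \mathbb{P}_{X|S=s}(g(X,S)=k)$, the coefficient multiplying $\mathbb{P}_{X|S=1}(g(X,S)=k)$ is $\lambda^{(1)}_k-\lambda^{(2)}_k$ and the coefficient multiplying $\mathbb{P}_{X|S=-1}(g(X,S)=k)$ is $-(\lambda^{(1)}_k-\lambda^{(2)}_k) = -s(\lambda^{(1)}_k-\lambda^{(2)}_k)$ with $s=-1$; in both cases the coefficient is $s(\lambda^{(1)}_k-\lambda^{(2)}_k)$. Hence
\begin{equation*}
\mathcal{R}_{\lambda^{(1)},\lambda^{(2)}}(g) = \mathcal{R}(g) + \sum_{s\in\mathcal{S}}\sum_{k=1}^K s(\lambda^{(1)}_k-\lambda^{(2)}_k)\,\mathbb{P}_{X|S=s}\!\left(g(X,S)=k\right) - \varepsilon\sum_{k=1}^K(\lambda^{(1)}_k+\lambda^{(2)}_k).
\end{equation*}

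Next I would convert the ``$=k$'' events into ``$\neq k$'' events. Since $\sum_{k=1}^K \mathbb{P}_{X|S=s}(g(X,S)=k) = 1$ for each fixed $s$, we have $\mathbb{P}_{X|S=s}(g(X,S)=k) = 1 - \mathbb{P}_{X|S=s}(g(X,S)\neq k)$, and $\sum_{k=1}^K s(\lambda^{(1)}_k-\lambda^{(2)}_k)\cdot 1$ is a term that will be absorbed; more cleanly, one keeps the ``$\neq k$'' form directly as in the proof of Lemma~\ref{lem:eqDecompLambda} by noting that $\sum_k s(\lambda^{(1)}_k-\lambda^{(2)}_k)\mathbb{P}_{X|S=s}(g(X,S)=k) = \sum_k s(\lambda^{(1)}_k-\lambda^{(2)}_k) - \sum_k s(\lambda^{(1)}_k-\lambda^{(2)}_k)\mathbb{P}_{X|S=s}(g(X,S)\neq k)$. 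Actually the slickest route is simply to invoke Lemma~\ref{lem:eqDecompLambda} with $\lambda_k := \lambda^{(1)}_k-\lambda^{(2)}_k$, which already gives
\begin{equation*}
\mathcal{R}(g) + \sum_{s\in\mathcal{S}}\sum_{k=1}^K s(\lambda^{(1)}_k-\lambda^{(2)}_k)\,\mathbb{P}_{X|S=s}\!\left(g(X,S)\neq k\right)\text{-type expression} = \sum_{s\in\mathcal{S}}\mathbb{E}_{X|S=s}\!\left[\sum_{k=1}^K(\pi_s p_k(X,S)-s(\lambda^{(1)}_k-\lambda^{(2)}_k))\one_{\{g(X,S)\neq k\}}\right],
\end{equation*}
after which one only needs to append the constant $-\varepsilon\sum_k(\lambda^{(1)}_k+\lambda^{(2)}_k)$. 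I would present it this way to avoid re-deriving the misclassification-risk decomposition $\mathcal{R}(g) = \sum_k\sum_s \mathbb{E}_{X|S=s}[\one_{\{g(X,s)\neq k\}}\pi_s p_k(X,s)]$ that was already established.

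There is essentially no obstacle here: the statement is a purely algebraic rearrangement, and the only mild subtlety is making sure the sign convention $s(\lambda^{(1)}_k-\lambda^{(2)}_k)$ correctly reproduces the two separate penalty sums (the $\lambda^{(1)}$ block with coefficient $+1$ when $s=1$ and $-1$ when $s=-1$; the $\lambda^{(2)}$ block with the opposite signs), together with tracking that the $-\varepsilon$ terms from both blocks combine into $-\varepsilon\sum_k(\lambda^{(1)}_k+\lambda^{(2)}_k)$. The constraint $\lambda^{(1)},\lambda^{(2)}\in\mathbb{R}_+^K$ plays no role in the identity itself and need not be used. So the proof will be a short paragraph: restate the definition, collect the penalty coefficients into $s(\lambda^{(1)}_k-\lambda^{(2)}_k)$ plus the $\varepsilon$ constant, apply Lemma~\ref{lem:eqDecompLambda} (or repeat its one-line conditioning argument) to the non-constant part, and conclude.
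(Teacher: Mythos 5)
Your proposal is correct and follows essentially the same route as the paper: regroup the two penalty blocks into the single coefficient $s(\lambda^{(1)}_k-\lambda^{(2)}_k)$, pass from the events $\{g=k\}$ to $\{g\neq k\}$ (the leftover constant cancels since $\sum_{s\in\{-1,1\}}s=0$), and combine with the decomposition $\mathbb{P}(g(X,S)\neq Y)=\sum_k\sum_s\mathbb{E}_{X|S=s}[\one_{\{g(X,s)\neq k\}}\pi_s p_k(X,s)]$, the $-\varepsilon\sum_k(\lambda^{(1)}_k+\lambda^{(2)}_k)$ term riding along as a constant. Your observation that one can simply invoke Lemma~\ref{lem:eqDecompLambda} with $\lambda_k:=\lambda^{(1)}_k-\lambda^{(2)}_k$ is a slightly more economical packaging of the identical computation the paper writes out in full.
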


\begin{proof}[Proof of Lemma~\ref{lem:eqDecompLambdaEps}]

Let $(\lambda^{(1)} , \lambda^{(2)}) \in \mathbb{R}_{+}^{2K}$ and recall the following definition of the $\varepsilon$-fair risk 
\begin{equation}
\label{eq:proofRiskDecompo}
\begin{aligned}
\mathcal{R}_{\lambda^{(1)}, \lambda^{(2)}}(g) =  \mathbb{P}\left(g(X,S) \neq Y\right)
-  \sum_{k=1}^K \sum_{s \in \mathcal{S}} s(\lambda^{(1)}_k-\lambda^{(2)}_k)\, \mathbb{E}_{X|S=s}\left[ \one_{\{g(X,s) \neq k\}}\right]
- \varepsilon \sum_{k=1}^K(\lambda^{(1)}_k+\lambda^{(2)}_k).
\end{aligned}
\end{equation}
The result in~\eqref{eq:eqDecompLambdaEps} directly follows from the following decomposition
\begin{eqnarray*}
\mathbb{P}\left(g(X,S) \neq Y\right) & = & \sum_{k = 1}^K \mathbb{E}\left[\one_{\{g(X,S) \neq k\}}\one_{\{Y=k\}} \right]\\
& = & \sum_{k=1}^K\sum_{s \in \mathcal{S}}\mathbb{E}\left[\one_{\{g(X,S) \neq k\}}\one_{\{S=s\}}p_k(X,S)\right]\\
& = & \sum_{k=1}^K \sum_{s \in \mathcal{S}}\mathbb{E}_{X|S=s}\left[\one_{\{g(X,s) \neq k\}}\pi_s p_k(X,s)\right]\enspace.
\end{eqnarray*} 
\end{proof}

\begin{proof}[Proof of Theorem~\ref{thm:equivalenceApprox}]
The proof is divided into two parts. First, we provide the proof for $\varepsilon > 0$. Then the second part is dedicated to the proof of the result when $\varepsilon = 0$ which corresponds to the case of exact fairness.

\paragraph*{Proof for approximate fairness}
From Lemma~\ref{lem:eqDecompLambdaEps}, we deduce that $g^*_{\lambda^{(1)}, \lambda^{(2)}}$ should be defined for all $(x,s)\in \cal{X}\times \cal{S}$ as
\begin{equation}
\label{eq:eqOracleLambda}
g^*_{\lambda^{(1)}, \lambda^{(2)}}(x,s) = \arg\max_{k \in [K]}  \left(\pi_s p_k(X,S)-s(\lambda^{(1)}_k-\lambda^{(2)}_k)\right)\enspace,
\end{equation}
since it minimizes the risk $\mathcal{R}_{\lambda^{(1)}, \lambda^{(2)}}$.  Now we should maximize $\mathcal{R}_{\lambda^{(1)}, \lambda^{(2)}}(g^*_{\lambda^{(1)}, \lambda^{(2)}}) $ in the dual variables. Notice that the $\varepsilon$-fair risk can be written as
\begin{equation*}
%\label{eq:riskPrecis}
\mathcal{R}_{\lambda^{(1)}, \lambda^{(2)}}(g^*_{\lambda^{(1)}, \lambda^{(2)}}) = 1-\sum_{s \in \mathcal{S}} \mathbb{E}_{X|S=s}
\left[\max_{k \in [K]} \left(\pi_s p_k(X,S)-s(\lambda^{(1)}_k-\lambda^{(2)}_k)\right)\right] - \varepsilon \sum_{k=1}^K(\lambda^{(1)}_k+\lambda^{(2)}_k)\enspace.
\end{equation*}
Hence, a maximizer $(\lambda^{*(1)}, \lambda^{*(2)})$ in $\mathbb{R}_{+}^{2K}$ of $(\lambda^{(1)}, \lambda^{(2)})\mapsto \mathcal{R}_{\lambda^{(1)},\lambda^{(2)}}(g^*_{\lambda^{(1)}, \lambda^{(2)}})$ is solution of
\begin{equation*}
(\lambda^{*(1)},\lambda^{*(2)}) \in \arg\min_{(\lambda^{(1)},\lambda^{(2)}) \in \mathbb{R}_{+}^{2K}}   \underbrace{\sum_{s \in \mathcal{S}}
\mathbb{E}_{X|S=s} \left[\max_{k \in [K]} \left(\pi_s p_k(X,s)-s(\lambda^{(1)}_k-\lambda^{(2)}_k)\right)\right]+\varepsilon \sum_{k=1}^K(\lambda^{(1)}_k+\lambda^{(2)}_k)}_{H(\lambda^{(1)},\lambda^{(2)})} \enspace.  
\end{equation*}
The rest of the proof consists in showing that such a calibration of the tuning parameters $(\lambda^{(1)}, \lambda^{(2)})$ implies that 
$g^*_{\lambda^{*(1)}, \lambda^{*(2)}}$ is indeed $\varepsilon$-fair.
Observe that
$$
 H(\lambda^{(1)},\lambda^{(2)}) \geq \varepsilon \sum_{k=1}^K (\lambda^{(1)}_k +\lambda^{(2)}_k)\enspace,
$$
and then $\lim_{\Vert (\lambda^{(1)},\lambda^{(2)}) \Vert_2^2 \to \infty}  H(\lambda^{(1)},\lambda^{(2)}) = + \infty$. Moreover, the mapping $H$ is continuous and convex in $ (\lambda^{(1)},\lambda^{(2)})$. Therefore the minimum $(\lambda^{*(1)},\lambda^{*(2)})$ exists, and there exists some constant $C_{\lambda}>0$ such that for all $k\in [K]$ and $j\in \{1,2\}$ we have $|\lambda_k^{(j)}| \leq C_{\lambda}$.

Let us derive a subgradient $\mathfrak{h}^*=(\mathfrak{h}^{*(1)}, \mathfrak{h}^{*(2)})$ of $H$ at the optimum $(\lambda^{*(1)}, \lambda^{*(2)})$ with $\mathfrak{h}^{*(1)} = \left(\mathfrak{h}_{1}^{*(1)}, \ldots,  \mathfrak{h}_{K}^{*(1)}\right) $ and $\mathfrak{h}^{*(2)} = \left(\mathfrak{h}_{1}^{*(2)}, \ldots,  \mathfrak{h}_{K}^{*(2)}\right)$ being two vectors in $\mathbb{R}^K$.
In order to express $\mathfrak{h}^*$ let us build the subdifferential of the function ${f} \left(x,(\lambda^{(1)}, \lambda^{(2)}) \right) := \max_{k\in[K]} \left\{ h_k^s\left(x,(\lambda^{(1)}, \lambda^{(2)}) \right) \right\}$ at the point $(\lambda^{*(1)}, \lambda^{*(2)})$ with
\begin{equation*}
 h_k^s\left(x,(\lambda^{(1)}, \lambda^{(2)}) \right)
= \pi_sp_k(x,s)- s\left(\lambda_k^{(1)}- \lambda_k^{(2)}\right).
\end{equation*}
We have that
\begin{multline*}
\partial {f}
\left(x,(\lambda^{*(1)}, \lambda^{*(2)}) \right) \\ = \conv \left\{ \nabla h_k^s \left(x,(\lambda^{*(1)}, \lambda^{*(2)}) \right) \ : \ h_k^s \left(x,(\lambda^{*(1)}, \lambda^{*(2)})\right) = \max_{j\in[K]} \left\{ h_j^s\left(x,(\lambda^{*(1)}, \lambda^{*(2)}) \right)  \right\}\right\},
\end{multline*}
where $\nabla h_k^s \left(x,(\lambda^{(1)}, \lambda^{(2)}) \right) \in \mathbb{R}^{2K}$
is the gradient of the function $h_k^s$ \emph{w.r.t.} $(\lambda^{(1)}, \lambda^{(2)})$.
Therefore, we deduce that a subgradient $\mathfrak{h}^*$ of $H$ at $(\lambda^{*(1)}, \lambda^{*(2)})$ can be expressed for each $k \in [K]$, and $l \in \{1,2\}$ as
\begin{multline*}
\mathfrak{h}_{k}^{*(l)} =   
(2l-3) \sum_{s\in \mathcal{S}} \left\{s
\mathbb{P}_{X|S=s}\left(
\forall j \neq k \,\,  (\pi_s p_k(X,s)-s (\lambda^{*(1)}_k-\lambda^{*(2)}_k))  > (\pi_s p_j(X,s)-s (\lambda^{*(1)}_j-\lambda^{*(2)}_j))
\right) \right.\\ 
  + s \, u_k^s \,  \mathbb{P}_{X|S=s}\left(\forall j \neq k \,\,  (\pi_s p_k(X,s)-s (\lambda^{*(1)}_k-\lambda^{*(2)}_k))   \geq (\pi_s p_j(X,s)-s (\lambda^{*(1)}_j-\lambda^{*(2)}_j)),\,\, \right.
 \\
 \left.\left. \exists j\neq k \,\,  (\pi_s p_k(X,s)-s (\lambda^{*(1)}_k-\lambda^{*(2)}_k)) =  (\pi_s p_j(X,s)-s (\lambda^{*(1)}_j-\lambda^{*(2)}_j))  \right)  \right\}  + \varepsilon
 \enspace,
\end{multline*}
with $u_k^s  \in [0,1]$ for all $k\in [K]$ and all $s\in \cal{S}$. Thanks to Assumption~\ref{ass:continuity}, $p_k(X,s)- p_j(X,s)$ has no atom for all $s\in \mathcal{S}$ and then the second part of the r.h.s. of the above equation vanishes and we have 
\begin{multline*}
\mathfrak{h}_{k}^{*(l)}     =  \\
   (2l-3) \sum_{s\in \mathcal{S}}  s
\mathbb{P}_{X|S=s}\left(
\forall j \neq k \,\,  (\pi_s p_k(X,s)-s (\lambda^{*(1)}_k-\lambda^{*(2)}_k))  > 
(\pi_s p_j(X,s)-s (\lambda^{*(1)}_j-\lambda^{*(2)}_j))
\right)   + \varepsilon,
\end{multline*}
 which can be written as
\begin{equation*}
%\label{eq:eqSubgradient}
 \mathfrak{h}_{k}^{*(l)} =  (2l-3) \sum_{s\in \mathcal{S}}  s
\mathbb{P}_{X|S=s}\left(g^*_{\lambda^{*(1)}, \lambda^{*(2)}}(X,S) = k\right)   + \varepsilon.
\end{equation*}

Now, we apply Corollary~\ref{coro:subgradMinInt} and deduce, from the above equation, that if
\begin{itemize}
\item[$\bullet$] $\lambda^{*(1)}_k \neq 0$ and $ \lambda^{*(2)}_k \neq 0$,  we then necessary have $\mathfrak{h}_{k}^{*(l)} =0$ for $l\in \{1,2\}$ and then
\begin{eqnarray*}
\mathbb{P}_{X|S=1}\left(g^{*}_{\lambda^{*(1)},\lambda^{*(2)}}(X,S) = k \right) - \mathbb{P}_{X|S=-1}\left(g^*_{\lambda^{*(1)},\lambda^{*(2)}}(X,S) = k \right)  & = & \varepsilon 
\\
\mathbb{P}_{X|S=1}\left(g^{*}_{\lambda^{*(1)},\lambda^{*(2)}}(X,S) = k \right) - \mathbb{P}_{X|S=-1}\left(g^*_{\lambda^{*(1)},\lambda^{*(2)}}(X,S) = k \right) & = & - \ \varepsilon \enspace,
\end{eqnarray*}
which leads to a contradiction.
\item[$\bullet$] $\lambda^{*(1)}_k = 0$ and $ \lambda^{*(2)}_k = 0$, we get
\begin{eqnarray*}
\mathbb{P}_{X|S=1}\left(g^{*}_{\lambda^{*(1)},\lambda^{*(2)}}(X,S) = k \right) - \mathbb{P}_{X|S=-1}\left(g^*_{\lambda^{*(1)},\lambda^{*(2)}}(X,S) = k \right) & \leq & \varepsilon 
\\
\mathbb{P}_{X|S=1}\left(g^{*}_{\lambda^{*(1)},\lambda^{*(2)}}(X,S) = k \right) - \mathbb{P}_{X|S=-1}\left(g^*_{\lambda^{*(1)},\lambda^{*(2)}}(X,S) = k \right) & \geq & - \ \varepsilon \enspace,
\end{eqnarray*}
which gives
\begin{equation*}
\left|\mathbb{P}_{X|S=1}\left(g^{*}_{\lambda^{*(1)},\lambda^{*(2)}}(X,S) = k \right) - \mathbb{P}_{X|S=-1}\left(g^*_{\lambda^{*(1)}, \lambda^{*(2)}}(X,S) = k \right) \right| \leq \varepsilon. 
\end{equation*}
\item[$\bullet$] Finally, if $\lambda^{*(1)}_k\lambda^{*(2)}_k = 0 \;\; {\rm and} \;\; \lambda^{*(1)}_k +\lambda^{*(2)}_k > 0 $, we get
\begin{equation*}
\left|\mathbb{P}_{X|S=1}\left(g^{*}_{\lambda^{*(1)},\lambda^{*(2)}}(X,S) = k \right) - \mathbb{P}_{X|S=-1}\left(g^*_{\lambda^{*(1)}, \lambda^{*(2)}}(X,S) = k \right) \right| = \varepsilon. 
\end{equation*}
\end{itemize}
Hence, we have shown that for each $k \in [K]$,
\begin{equation*}
\left|\mathbb{P}_{X|S=1}\left(g^{*}_{\lambda^{*(1)},\lambda^{*(2)}}(X,S) = k \right) - \mathbb{P}_{X|S=-1}\left(g^*_{\lambda^{*(1)}, \lambda^{*(2)}}(X,S) = k \right) \right| \leq \varepsilon,
\end{equation*}
which means that $g_{\lambda^{*(1)}, \lambda^{*(2)}}^*$ is $\varepsilon$-fair: $\mathcal{U}(g_{\lambda^{*(1)}, \lambda^{*(2)}}^*) \leq \varepsilon$.

Furthermore, we also have that for each $k \in [K]$, 
the vector $(\lambda^{*(1)}, \lambda^{*(2)})$ meets the following constraint
$\lambda^{*(1)}_k\lambda^{*(2)}_k = 0 \;\; {\rm and} \;\; \lambda^{*(1)}_k +\lambda^{*(2)}_k \geq 0$. Since parameters $(\lambda^{*(1)}, \lambda^{*(2)})$ are bounded, we then deduce that for any classifier $g$ (see for instance~\eqref{eq:proofRiskDecompo})
\begin{equation*}
\mathcal{R}_{\lambda^{*(1)}, \lambda^{*(2)}}(g) \leq R(g) + C \left(\mathcal{U}(g) - \varepsilon\right),
\end{equation*}
therefore, for any $g \in \mathcal{G}_{\varepsilon-{\rm fair}}$
\begin{equation}
\label{eq:eqBoundedRisk}
\mathcal{R}_{\lambda^{*(1)}, \lambda^{*(2)}}(g) \leq R(g).
\end{equation}
Besides, considering the three above cases, we notice that 
\begin{equation*}
\left|\mathbb{P}_{X|S=1}\left(g^{*}_{\lambda^{*(1)},\lambda^{*(2)}}(X,S) = k \right) - \mathbb{P}_{X|S=-1}\left(g^*_{\lambda^{*(1)}, \lambda^{*(2)}}(X,S) = k \right) \right| <  \varepsilon \Rightarrow \lambda_k^{*(1)}= \lambda_k^{*(2)} = 0.
\end{equation*}
Since $g_{\lambda^{*(1)}, \lambda^{*(2)}}^* \in\mathcal{G}_{\varepsilon-{\rm fair}}$,
the above equation and Equation~\eqref{eq:eqBoundedRisk} imply that for any $g \in \mathcal{G}_{\varepsilon-{\rm fair}}$
\begin{equation*}
R(g_{\lambda^{*(1)}, \lambda^{*(2)}}^*) = \mathcal{R}_{\lambda^{*(1)}, \lambda^{*(2)}}(g_{\lambda^{*(1)}, \lambda^{*(2)}}^*) \leq \mathcal{R}_{\lambda^{*(1)}, \lambda^{*(2)}}(g) \leq R(g),
\end{equation*}
which concludes the proof.

\paragraph*{Proof for exact fairness}
First, we apply Lemma~\ref{lem:eqDecompLambdaEps} with $\varepsilon = 0$ and then have
\begin{equation*}
%\label{eq:riskPrecis}
\mathcal{R}_{\lambda^{(1)}, \lambda^{(2)}}(g^*_{\lambda^{(1)}, \lambda^{(2)}}) = 1-\sum_{s \in \mathcal{S}} \mathbb{E}_{X|S=s}
\left[\max_{k \in [K]} \left(\pi_s p_k(X,S)-s(\lambda^{(1)}_k-\lambda^{(2)}_k)\right)\right],
\end{equation*}
with 
\begin{equation*}
g^*_{\lambda^{(1)}, \lambda^{(2)}}(x,s) = \arg\max_{k \in [K]}  \left(\pi_s p_k(X,S)-s(\lambda^{(1)}_k-\lambda^{(2)}_k)\right)\enspace.
\end{equation*}
Therefore, it is not difficult to see that using the reparametrization 
\begin{equation}
\label{eq:proofreparameter}
\beta_k = \lambda^{(1)}_k-\lambda_k^{(2)}, \;\; k = 1,\ldots,K,
\end{equation}
we can write
\begin{equation}
\label{eq:riskPrecis}
\mathcal{R}_{\lambda^{(1)}, \lambda^{(2)}}(g^*_{\lambda^{(1)}, \lambda^{(2)}}) =  \mathcal{R}_{\beta}(g^*_{\beta}) = 1 - \sum_{s  \in \mathcal{S}}
\mathbb{E}_{X|S=s}\left[\max_{k \in [K]} \left(\pi_s p_k(X,s)-s \beta_k\right)\right] \enspace.
\end{equation}
 Hence, a maximizer $\beta^*$ in $\mathbb{R}^K$ of $\beta \mapsto \mathcal{R}_{\beta}(g^*_{\beta})$ takes the form 
\begin{equation*}
\beta^* \in \arg\min_{\beta \in \mathbb{R}^K}   \sum_{s \in \mathcal{S}}
\mathbb{E}_{X|S=s} \left[\max_{k \in [K]} \left(\pi_s p_k(X,s)-s\beta_k\right)\right]\enspace.  
\end{equation*}
The above criterion is convex in $\beta$. Therefore, first order optimality conditions for the minimization over $\beta$ of the above criterion imply that, for each $k\in [K]$,
%\begin{small}
\begin{equation*}
\begin{aligned}
    0 & =  \sum_{s\in \mathcal{S}}  s
\mathbb{P}_{X|S=s}\left(\forall j \neq k \,\,  (\pi_s p_k(X,s)-s \beta_k^*)  > (\pi_s p_j(X,s)-s \beta_j^*)  \right) \\ 
&
  + s u_k^s \mathbb{P}_{X|S=s}\left(\forall j \neq k \,\,  (\pi_s p_k(X,s)-s \beta_k^*)   \geq (\pi_s p_j(X,s)-s \beta_j^*),\,\, 
  \exists j\neq k \,\,  (\pi_s p_k(X,s)-s \beta_k^*) =  (\pi_s p_j(X,s)-s \beta_j^*) \right)\enspace,
\end{aligned}
\end{equation*}
with $u_k^s \in [0,1]$ for all $k\in [K]$ and $s \in \mathcal{S}$. As in the case where $\varepsilon>0$, we use Assumption~\ref{ass:continuity} on the distribution of $p_k(X,s)- p_j(X,s)$ to show that the second part of the r.h.s. vanishes.
Therefore for all $k \in [K]$ 
\begin{equation*}
\mathbb{P}_{X|S=1}\left(g^{*}_{\beta^*}(X,S) \neq k \right) = \mathbb{P}_{X|S=-1}\left(g^*_{\beta^*}(X,S) \neq k \right) \enspace,
\end{equation*}
meaning that the classifier $g^*_{\beta^*}$ is fair. Furthermore, for any fair classifier $g\in\mathcal{G}_{\rm fair}$, we observe that 
\begin{equation*}
\mathcal{R}(g^*_{\beta^*}) = \mathcal{R}_{\beta^*}(g^*_{\beta^*})  \leq  \mathcal{R}_{\beta^*}(g) = \mathcal{R}(g),   
\end{equation*}
so that $g^*_{\beta^*}$ is also an optimal fair classifier. 

Conversely, consider any optimal fair classifier $g^*_{\rm fair}\in\mathcal{G_{\rm fair}}$. Combining the fairness of $g^*_{\rm fair}$ with the optimality of $\beta^*$ over the family $(R_{\beta}(g^*_\beta))_{\beta\in\mathbf{R}^K}$, we deduce 
\begin{equation*}
\mathcal{R}_{\beta^*}(g^*_{\rm fair}) = \mathcal{R}(g^*_{\rm fair})  \leq 
%\mathcal{R}(g^*_{\beta^*})=
\mathcal{R}_{\beta^*}(g^*_{\beta^*}) \leq \mathcal{R}_{\beta^*}(g),   \mbox{for any $g\in\mathcal{G}$}\,.
\end{equation*}
Hence any optimal fair classifier is a minimizer of $\mathcal{R}_{\beta^*}$ over $\mathcal{G}$. 

\end{proof}

\section{Proof of Section~\ref{sec:datadriven}}
\label{sec:proofofSec3}

We first introduce some notation. We recall that $N_{\min} = \min(N_1, N_{-1})$ and denote by $\hat{P}_{X|S=s}$ the empirical measure with respect to $(X^s_1, \ldots,X^s_{N_s})$ for $s \in \mathcal{S}$. 
Furthermore, throughout this section, we consider the following convention $\frac{0}{0} = 0$. Hence, if $N_s = 0$, we then have 
$\hat{P}_{X|S=s}(A) = 0$ for any event $A$.

We start this section with two results. Lemma-\ref{lm:PigeonHole} directly follows from similar arguments as in the proof of Lemma~B.8 in~\cite{Chzhen_Denis_Hebiri_Oneto_Pontil20Recali}. Its proof is hence omitted.
\begin{lem}
\label{lm:PigeonHole}
Conditional on the data, we have that, for each $s \in \mathcal{S}$ and $k \in [K]$,
\begin{eqnarray*}
\hat{\mathbb{P}}_{X|S=s}
\left(\exists j \neq k, \hat{h}_{k}^s(X, \hat{\lambda}^{(1)}_k,\hat{\lambda}^{(2)}_k) =  \hat{h}_{j}^s(X, \hat{\lambda}^{(1)}_j,\hat{\lambda}^{(2)}_j)\right) & = & 
\frac{1}{N_s} \sum_{i=1}^{N_s}
\one_{ \left\{\exists j \neq k, \hat{h}_{k}^s(X_i^{s}, \hat{\lambda}^{(1)}_k,\hat{\lambda}^{(2)}_k) =  \hat{h}_{j}^s(X_i^{s}, \hat{\lambda}^{(1)}_j,\hat{\lambda}^{(2)}_j)\right\}}
 \\ 
 & \leq &\frac{K-1}{N_s} \;\; {\it a.s.} \enspace,
\end{eqnarray*}
where $\hat{h}_{k}^s:(x,\lambda^{(1)}, \lambda^{(2)}) \mapsto \hat{\pi}_s \bar{p}_k(x,s)-s\left(\lambda^{(1)}-\lambda^{(2)} \right)$.
\end{lem}

\begin{lem}
\label{lm:DKWConsequence}
Let us introduce for all $k\in [K]$ the random variable
$$
\hat{A}_k = \left| \sum_{s\in \mathcal{S}}s
\left(\mathbb{P}_{X|S=s}-\hat{\mathbb{P}}_{X|S=s}\right)\left(\forall j \neq k \,\,  \hat{h}_{k}^s(X, \hat{\lambda}^{(1)}_k,\hat{\lambda}^{(2)}_k) > \hat{h}_{j}^s(X, \hat{\lambda}^{(1)}_j, \hat{\lambda}^{(2)}_j)
\right) \right| \enspace.
$$
Then all $k\in [K]$
\begin{enumerate}
    \item there exists $C_1>0$, that depends on $K$ such that 
    \begin{equation*}
\mathbb{E} \left[\hat{A}_k \one_{\{N_{\min} \geq 1\}}\ | \ \mathcal{D}_n,S_1,\ldots,S_N\right]  \leq \dfrac{C_1\one_{\{N_{\min} \geq 1\}}}{\sqrt{N_{\min}}} \enspace;
\end{equation*}
\item for all $\delta >0 $, the event $\mathcal{A}_k (\delta) = \left\{ \hat{A}_k \leq  K \sqrt{ \frac{2  \log(\frac{4K}{\delta})} {N_{\min}}} \right\} \bigcap \left\{N_{\min} \geq 1\right\}$ holds with probability greater than $1-\delta$.
\end{enumerate}
\end{lem}
\begin{proof}
\begin{enumerate}
\item For this part, we work on the event $\{N_{\min} \geq 1\}$ conditionally on $\mathcal{D}_n$ and on $S_1,\ldots,S_N$. For $s \in \{-1,1\}$, and $k \in [K]$, we have
\begin{multline*}
\label{eq:eqDKW1}
\left|\left(\mathbb{P}_{X|S=s}- \hat{\mathbb{P}}_{X|S=s}\right)\left(\forall j \neq k,\,\, \hat{h}_{k}^s(X, \hat{\lambda}^{(1)}_k,\hat{\lambda}^{(2)}_k) > \hat{h}_{j}^s(X, \hat{\lambda}^{(1)}_j,\hat{\lambda}^{(2)}_j\right) \right|  = \\ \left|\left(\mathbb{P}_{X|S=s}-
\hat{\mathbb{P}}_{X|S=s}\right)\left(\forall j \neq k,\,\,
\bar{p}_k(X,s) - \bar{p}_j(X,s) > \frac{s\left((\hat{\lambda}^{(1)}_k-\hat{\lambda}^{(2)}_k) -(\hat{\lambda}^{(1)}_j- (\hat{\lambda}^{(2)}_j)\right)}{\hat{\pi}_s}\right) \right| \\
\leq \sum_{j = 1}^K \sup_{t \in \mathbb{R}} 
\left|\left(\mathbb{P}_{X|S=s}-
\hat{\mathbb{P}}_{X|S=s}\right)\left(
\bar{p}_k(X,s) - \bar{p}_j(X,s) > t\right) \right| \enspace.
\end{multline*}
Therefore, from the Dvoretzky-Kiefer-Wolfowitz Inequality, 
%, conditional on $\mathcal{D}_n$ and on $(S_1, \ldots,S_N)$, 
we deduce that,
for each $s \in \mathcal{S}$ and $k \in [K]$
\begin{equation*}
\mathbb{E} \left[\hat{A}_k \one_{\{N_{\min} \geq 1\}}\ | \ \mathcal{D}_n,S_1,\ldots,S_N\right]  \leq \dfrac{C_1\one_{\{N_{\min} \geq 1\}}}{\sqrt{N_{\min}}}.
%\leq \dfrac{C_1}{\sqrt{N}},
\end{equation*}

\item  From the Dvoretzky-Kiefer-Wolfowitz Inequality, conditional on $\mathcal{D}_n$ and on $(S_1, \ldots,S_N)$,
we have on the event $\{N_{\min} \geq 1\}$, for each $u > 0$ and for all $j,k\in [K]$, $s\in \mathcal{S}$, and $t>0$
\begin{equation*}
\mathbb{P}\left(\sup_{t \in \mathbb{R}} 
\left|\left(\mathbb{P}_{X|S=s}-
\hat{\mathbb{P}}_{X|S=s}\right)\left(
\bar{p}_k(X,s) - \bar{p}_j(X,s) > t\right) \right| \geq u\right) \leq 
2 \exp(-2N_{s}u^2) \leq 2\exp(-2N_{{\rm min}}u^2).
\end{equation*}
Since
\begin{equation*}
\hat{A}_k \leq \sum_{s \in \mathcal{S}}\sum_{j=1}^K \sup_{t \in \mathbb{R}} 
\left|\left(\mathbb{P}_{X|S=s}-
\hat{\mathbb{P}}_{X|S=s}\right)\left(
\bar{p}_k(X,s) - \bar{p}_j(X,s) > t\right) \right|,
\end{equation*}
we deduce for each $u> 0$ and $k \in [K]$
\begin{eqnarray*}
\mathbb{P}\left(\hat{A}_k \geq u \right) & \leq &  \sum_{s \in \mathcal{S}}\sum_{j=1}^K \mathbb{P}\left(\sup_{t \in \mathbb{R}} 
\left|\left(\mathbb{P}_{X|S=s}-
\hat{\mathbb{P}}_{X|S=s}\right)\left(
\bar{p}_k(X,s) - \bar{p}_j(X,s) > t\right) \right| \geq \frac{u}{2K}\right)\\
& \leq & 4K \exp\left(-\frac{u^2 N_{{\rm min}}}{2K^2}\right).
\end{eqnarray*}
Hence, from the above inequality, we obtain that 
\begin{equation*}
\one_{\{N_ {\min} \geq 1\}} \mathbb{P}\left(\hat{A}_k \geq K \sqrt{ \frac{2  \log(\frac{4K}{\delta})} {N_{\min}}} \; \middle |  \; \mathcal{D}_n,(S_1,\ldots,S_N) \right) \leq \one_{\{N_ {\min} \geq 1\}} \delta \leq \delta, 
\end{equation*}
which yields the desired result.

\end{enumerate}
\end{proof}

Let us now consider the proofs of Theorem~\ref{thm:unfairness} and Theorem~\ref{thm:excessRisk}.

\begin{proof}[Proof of Theorem~\ref{thm:unfairness}]
As in the proof of Theorem~\ref{thm:equivalenceApprox}],
we consider separately the cases of approximate ($\varepsilon>0$) and exact ($\varepsilon=0$) fairness.

\paragraph*{Unfairness control in the case of approximate fairness}
We first consider the case where $\varepsilon > 0$. 
As in Lemma \ref{lm:PigeonHole}, we first introduce, for $s \in \mathcal{S}$ and $k \in [K]$,
\begin{equation*}
\hat{h}_{k}^s:\left(x, \lambda^{(1)}, \lambda^{(2)}\right) \mapsto \hat{\pi}_s \bar{p}_k(x,s)-s \left(\lambda^{(1)}-\lambda^{(2)} \right)   \enspace.
\end{equation*}
By construction, the estimator $\bar{p}_k(X,S)$ is randomized and then satisfies an analog version of Assumption~\ref{ass:continuity}. Therefore for all $s \in \mathcal{S}$ and $k \in [K]$
\begin{equation}
\label{eq:proofhatgenscore}
\mathbb{P}_{X|S=s}\left(\hat{g}_{\varepsilon}(X,S) = k\right) = 
\mathbb{P}_{X|S=s}\left(\forall j \neq k, \hat{h}_{k}^s(X, \hat{\lambda}^{(1)}_k,\hat{\lambda}^{(2)}_k) > \hat{h}_{j}^s(X, \hat{\lambda}^{(1)}_j, \hat{\lambda}^{(2)}_j)\right)\enspace.
\end{equation}
Now, we consider similar arguments as in Proof of Theorem~\ref{thm:equivalenceApprox}. First we observe that
\begin{equation}
\label{eq:eqUnfairness1}
\hat{H}\left(\lambda^{(1)}, \lambda^{(2)}\right)) \geq \varepsilon \sum_{k=1}^K (\lambda^{(1)}_k+\lambda^{(2)}_k)\enspace, 
\end{equation}
where $\hat{H}$ is the empirical version of $H$ and is defined as
$$ \hat{H}(\lambda^{(1)}, \lambda^{(2)})
=
\sum_{s \in \mathcal{S}}
\hat{\mathbb{E}}_{X|S=s} \left[\max_{k \in [K]} \left(\pi_s \bar{p}_k(X,s)-s(\lambda^{(1)}_k-\lambda^{(2)}_k)\right)\right]+\varepsilon \sum_{k=1}^K(\lambda^{(1)}_k+\lambda^{(2)}_k)\enspace,
$$
with $\hat{\mathbb{E}}_{X|S=s} $ being the empirical expectation over the points $X_i$ from the dataset $\mathcal{D}'_N$ such that the sensitive attribute $S_i = s$. From Equation~\eqref{eq:eqUnfairness1}, we deduce that the minimizer  $(\hat{\lambda}^{(1)}, \hat{\lambda}^{(2)})$ exists and is bounded by some $C_{\lambda}' > 0$ which depends neither on $N$ nor on $n$.
Furthermore, we have that a subgradient $\hat{\mathfrak{h}}$ of $\hat{H}$ 
can be expressed for each $k \in [K]$ and $l \in \{1,2\}$ as follows
\begin{multline}
\hat{\mathfrak{h}}_{k}^{(l)} =   
(2l-3) \sum_{s\in \mathcal{S}} \left\{s
\hat{\mathbb{P}}_{X|S=s}\left(\forall j \neq k \,\,  \hat{h}_{k}^s(X, \hat{\lambda}^{(1)}_k,\hat{\lambda}^{(2)}_k) > \hat{h}_{j}^s(X, \hat{\lambda}^{(1)}_j, \hat{\lambda}^{(2)}_j)
\right) \right.\\ 
  + s \, u_k^s \,  \hat{\mathbb{P}}_{X|S=s}\left(\forall j \neq k \,\,  \hat{h}_{k}^s(X, \hat{\lambda}^{(1)}_k,\hat{\lambda}^{(2)}_k) \geq \hat{h}_{j}^s(X, \hat{\lambda}^{(1)}_j, \hat{\lambda}^{(2)}_j),\,\, \right.
 \\
 \left.\left. \exists j\neq k \,\,  \hat{h}_{k}^s(X, \hat{\lambda}^{(1)}_k,\hat{\lambda}^{(2)}_k) = \hat{h}_{j}^s(X, \hat{\lambda}^{(1)}_j, \hat{\lambda}^{(2)}_j)  \right)  \right\}  + \varepsilon
 \enspace,
 \label{eq:proofSubdiffenempiriq}
\end{multline}
with $u_k^s \in [0,1]$.
Applying Lemma~\ref{lm:PigeonHole}, we observe that the second term in r.h.s is such that
\begin{multline}
0 \leq  \hat{\mathbb{P}}_{X|S=s}\left(\forall j \neq k \,\,  \hat{h}_{k}^s(X, \hat{\lambda}^{(1)}_k,\hat{\lambda}^{(2)}_k) \geq \hat{h}_{j}^s(X, \hat{\lambda}^{(1)}_j, \hat{\lambda}^{(2)}_j),\,\, \right. 
 \\
 \left. \exists j\neq k \,\,  \hat{h}_{k}^s(X, \hat{\lambda}^{(1)}_k,\hat{\lambda}^{(2)}_k) = \hat{h}_{j}^s(X, \hat{\lambda}^{(1)}_j, \hat{\lambda}^{(2)}_j)  \right)   \leq \dfrac{K-1}{N_{\rm min}} \enspace. \label{eq:proofPigeoholl}
\end{multline}
Hereafter, we follow the same reasoning as in the proof of Theorem~\ref{thm:equivalenceApprox}. We use Corallary~\ref{coro:subgradMinInt} 
and consider the following cases for $k \in [K]$.
\begin{itemize}
\item[$\bullet$] if  $\hat{\lambda}^{(1)}_k = 0$, and $\hat{\lambda}^{(2)}_k = 0$, we deduce that
\begin{equation}
\label{eq:eqCondition2}
\left|\sum_s s \hat{\mathbb{P}}_{X|S=s}\left(\forall j \neq k \,\,  \hat{h}_{k}^s(X, \hat{\lambda}^{(1)}_k,\hat{\lambda}^{(2)}_k) > \hat{h}_{j}^s(X, \hat{\lambda}^{(1)}_j, \hat{\lambda}^{(2)}_j)
\right) \right| \leq \varepsilon+\dfrac{2(K-1)}{N_{\rm min}}.
\end{equation}
\item if there exists $l \in \{1,2\}$ such that $\hat{\lambda}_k^{(l)} \neq 0$, then
$\hat{\mathfrak{h}}_k^l  = 0$.
\end{itemize}
Let us now deal with the unfairness of $\hat{g}_{\varepsilon}$, recalled in~\eqref{eq:proofhatgenscore}. Bounding this quantity is a direct implication of the above lines. On the one hand, let $k \in [K]$ such that $\hat{\lambda}^{(1)}_k = 0$, and $\hat{\lambda}^{(2)}_k = 0$, then from Equation~\eqref{eq:eqCondition2}, we have
\begin{multline}
\label{eq:eqlambda0}
\left|\sum_{s\in \mathcal{S}}s
\mathbb{P}_{X|S=s}\left(\hat{g}_{\varepsilon}(X,S) = k
\right) \right| = 
\left|\sum_{s\in \mathcal{S}}s
\mathbb{P}_{X|S=s}\left(\forall j \neq k \,\,  \hat{h}_{k}^s(X, \hat{\lambda}^{(1)}_k,\hat{\lambda}^{(2)}_k) > \hat{h}_{j}^s(X, \hat{\lambda}^{(1)}_j, \hat{\lambda}^{(2)}_j)
\right) \right| \\
\leq \left| \sum_{s\in \mathcal{S}}s
\left(\mathbb{P}_{X|S=s}-\hat{\mathbb{P}}_{X|S=s}\right)\left(\forall j \neq k \,\,  \hat{h}_{k}^s(X, \hat{\lambda}^{(1)}_k,\hat{\lambda}^{(2)}_k) > \hat{h}_{j}^s(X, \hat{\lambda}^{(1)}_j, \hat{\lambda}^{(2)}_j)
\right)\right| 
+ \varepsilon + \dfrac{2(K-1)}{N_{\rm min}}.
\end{multline}
On the other hand, if for $k \in [K]$ there exists $l \in \{1,2\}$ such that $\hat{\mathfrak{h}}_k^l = 0$ then in view of Equation~\eqref{eq:proofSubdiffenempiriq}, we also deduce that
\begin{multline*}
\left|\sum_{s\in \mathcal{S}}s
\mathbb{P}_{X|S=s}\left(\hat{g}_{\varepsilon}(X,S) = k
\right) \right| \\
\leq \left| \sum_{s\in \mathcal{S}}s
\left(\mathbb{P}_{X|S=s}-\hat{\mathbb{P}}_{X|S=s}\right)\left(\forall j \neq k \,\,  \hat{h}_{k}^s(X, \hat{\lambda}^{(1)}_k,\hat{\lambda}^{(2)}_k) > \hat{h}_{j}^s(X, \hat{\lambda}^{(1)}_j, \hat{\lambda}^{(2)}_j)
\right)\right| 
+ \varepsilon + \dfrac{2(K-1)}{N_{\rm min}}.
\end{multline*}
Therefore, from the above inequalities,
taking the maximum over $k\in [K]$, we deduce from Lemma~\ref{lm:DKWConsequence} (point 1.) that 
conditional on $ \mathcal{D}_n$ and on $(S_1, \ldots,S_N)$, 
\begin{equation*}
\mathbb{E}\left[\mathcal{U}(\hat{g}_{\varepsilon})\right] 
\leq \varepsilon +
\left(\dfrac{K C_1}{\sqrt{{N_{\min}}}} + \dfrac{2(K-1)}{N_{\min}}\right) \one_{\{N_{\rm min} \geq 1\}} + \mathbb{E}\left[\mathcal{U}(\hat{g}_{\varepsilon})\one_{\{N_{\min} =0\}}\right]
\leq \varepsilon + \dfrac{c_1\one_{\{N_{\rm min} \geq 1\}}}{\sqrt{{N_{\min}}}}
+ C_K \mathbb{P}\left(N_{\min} = 0\right),
\end{equation*}
for some non negative constants $c_1$ and $C_K$ that depend on $K$. 
Now, we observe that 
\begin{equation*}
\mathbb{P}\left(N_{\min}= 0\right) = \mathbb{P}\left(N_1 =0\right)+\mathbb{P}\left(N_{-1}=0\right))\leq \exp(\log(1-\pi_1)N)+\exp(\log(1-\pi_{-1})N).
\end{equation*}
Therefore, applying Lemma~\ref{lm:InverseBinom}, we deduce that
\begin{equation*}
\mathbb{E}\left[\mathcal{U}(\hat{g}_{\varepsilon})\right] \leq \dfrac{C}{\sqrt{N\min(\pi_{-1},\pi_1)}}.
\end{equation*}
\paragraph*{Unfairness control in the case of exact fairness}
Along this proof, we need to adjust the notation as in the case of the optimal rule, \emph{c.f.}~\eqref{eq:proofreparameter}.
As in Lemma \ref{lm:PigeonHole}, we first introduce, for $s \in \mathcal{S}$ and $k \in [K]$,
\begin{equation*}
\hat{h}_{k}^s:(x, \beta) \mapsto \hat{\pi}_s \bar{p}_k(x,s)-s{\beta}\enspace.
\end{equation*}
By construction, the estimator $\bar{p}_k(X,S)$ satisfies Assumption~\ref{ass:continuity}, therefore for all $s \in \mathcal{S}$ and $k \in [K]$
\begin{equation*}
\mathbb{P}_{X|S=s}\left(\hat{g}(X,S) = k\right) = 
\mathbb{P}_{X|S=s}\left(\forall j \neq k, \hat{h}_{k}^s(X, \hat{\beta}_k) > \hat{h}_{j}^s(X, \hat{\beta}_j)\right)\enspace.
\end{equation*}
Considering the first order optimality conditions for $\hat\beta$, we can show that, for all $k \in [K]$ and $s \in \mathcal{S}$, there  exists $\alpha_k^s \in [-1,1]$ such that
\begin{multline*}
s\hat{\mathbb{P}}_{X|S=s}\left(\forall j \neq k, \hat{h}_{k}^s(X, \hat{\beta}_k) > \hat{h}_{j}^s(X, \hat{\beta}_j)\right) +\\ \alpha_k^s    \hat{\mathbb{P}}_{X|S=s}\left(\forall j \neq k, \hat{h}_{k}^s(X, \hat{\beta}_k) \geq  \hat{h}_{j}^s(X, \hat{\beta}_j), \; \exists j \neq k, \hat{h}_{k}^s(X, \hat{\beta}_k) =  \hat{h}_{j}^s(X, \hat{\beta}_j)\right)
= 0\enspace.
\end{multline*}
From the above equation, we deduce that
\begin{multline*}
\mathcal{U}(\hat{g})=
\max_{k= 1\ldots,K}\left|  
\mathbb{P}_{X|S=1}\left(\hat{g}(X,S) = k\right) - \mathbb{P}_{X|S=-1}\left(\hat{g}(X,S) = k\right) \right|
\\ 
\leq \max_{k=1,\ldots,K}\sum_{s \in \mathcal{S}} \left|\left(\mathbb{P}_{X|S=s}- \hat{\mathbb{P}}_{X|S=s}\right)\left(\forall j \neq k, \,\, \hat{h}_{k}^s(X, \hat{\beta}_k) > \hat{h}_{j}^s(X, \hat{\beta}_j)\right) \right| \\+
\max_{k=1,\ldots,K}\sum_{s \in \mathcal{S}} \hat{\mathbb{P}}_{X|S=s}\left( \exists j \neq k,\,\, \hat{h}_{k}^s(X, \hat{\beta}_k) =  \hat{h}_{j}^s(X, \hat{\beta}_j)\right)\enspace.
\end{multline*}
Observe that for all $k\in[K]$
\begin{multline*}
\left|\left(\mathbb{P}_{X|S=s}- \hat{\mathbb{P}}_{X|S=s}\right)\left(\forall j \neq k,\,\, \hat{h}_{k}^s(X, \hat{\beta}_k) > \hat{h}_{j}^s(X, \hat{\beta}_j)\right) \right|  = \\ \left|\left(\mathbb{P}_{X|S=s}-
\hat{\mathbb{P}}_{X|S=s}\right)\left(\forall j \neq k,\,\,
\bar{p}_k(X,s) - \bar{p}_j(X,s) \geq \frac{s(\hat{\beta}_k-\hat{\beta}_j)}{\hat{\pi}_s}\right) \right| \\
\leq \sum_{j = 1}^K \sup_{t \in \mathbb{R}} 
\left|\left(\mathbb{P}_{X|S=s}-
\hat{\mathbb{P}}_{X|S=s}\right)\left(
\bar{p}_k(X,s) - \bar{p}_j(X,s) \geq t\right) \right| \enspace.
\end{multline*}
Therefore, from the Dvoretzky-Kiefer-Wolfowitz Inequality conditional on $\mathcal{D}_n$ and on $(S_1, \ldots,S_N)$, we deduce that,
for each $s \in \mathcal{S}$ and $k \in [K]$
\begin{equation*}
\mathbb{E}\left[\left|\left(\mathbb{P}_{X|S=s}- \hat{\mathbb{P}}_{X|S=s}\right)\left(\forall j \neq k, \hat{h}_{k}^s(X, \hat{\beta}_k) > \hat{h}_{j}^s(X, \hat{\beta}_j)\right) \right|\right] \leq 
C \sqrt{\dfrac{1}{N_s}} \enspace.
\end{equation*}
Applying Lemma~\ref{lm:PigeonHole}, we then get that, 
conditional on $\mathcal{D}_n$ and on $(S_1, \ldots,S_N)$, we have that
\begin{equation*}
\mathbb{E}\left[\mathcal{U}(\hat{g})\right] \leq  C  \sum_{s \in \mathcal{S}}   \sqrt{\dfrac{1}{N_s}} 
\enspace, 
\end{equation*}
for some positive constant $C$ that depends in $K$.
Since $N_s$ is a binomial random variable with parameters $N$ and $\pi_s$, we get
\begin{equation*}
\mathbb{E}\left[\mathcal{U}(\hat{g})\right] \leq
C \sqrt{\dfrac{1}{N}},
\end{equation*}
where $C$ depends on $K$ and $\min(\pi_{-1}, \pi_1)$.
\end{proof}

\begin{proof}[Proof of Theorem~\ref{thm:caracEpsfairEstimator}]
Let $0< \delta < 1$ and let $k \in [K]$. From Equations~\eqref{eq:proofSubdiffenempiriq}, and~\eqref{eq:proofPigeoholl} and using Corallary~\ref{coro:subgradMinInt}, we deduce that if 
$\lambda_k^{(1)} \neq 0, \lambda_k^{(2)} \neq 0$, then
\begin{eqnarray*}
\sum_{s\in \mathcal{S}}s
\hat{\mathbb{P}}_{X|S=s}\left(\forall j \neq k \,\,  \hat{h}_{k}^s(X, \hat{\lambda}^{(1)}_k,\hat{\lambda}^{(2)}_k) > \hat{h}_{j}^s(X, \hat{\lambda}^{(1)}_j, \hat{\lambda}^{(2)}_j)
\right) & \geq & \varepsilon - \dfrac{2(K-1)}{N_{\min}}\\
\sum_{s\in \mathcal{S}}s
\hat{\mathbb{P}}_{X|S=s}\left(\forall j \neq k \,\,  \hat{h}_{k}^s(X, \hat{\lambda}^{(1)}_k,\hat{\lambda}^{(2)}_k) > \hat{h}_{j}^s(X, \hat{\lambda}^{(1)}_j, \hat{\lambda}^{(2)}_j)
\right) & \leq & -\varepsilon + \dfrac{2(K-1)}{N_{\min}}\enspace. 
\end{eqnarray*}
Therefore, since $\dfrac{C_{\delta}}{\sqrt{N_{\min}}} \geq \dfrac{2(K-1)}{N_{\min}}$, we deduce that on $\mathcal{A}_{\min} = \left\{\varepsilon > \dfrac{C_{\delta}}{\sqrt{N_{\min}}} \right\}$ 
\begin{equation*}
0<\sum_{s\in \mathcal{S}}s
\hat{\mathbb{P}}_{X|S=s}\left(\forall j \neq k \,\,  \hat{h}_{k}^s(X, \hat{\lambda}^{(1)}_k,\hat{\lambda}^{(2)}_k) > \hat{h}_{j}^s(X, \hat{\lambda}^{(1)}_j, \hat{\lambda}^{(2)}_j)
\right) <0 \enspace,
\end{equation*}
which leads to a contradiction. Therefore, on the event $\mathcal{A}_{\min}$,
we necessary have $\hat{\lambda}^{(1)}_k \hat{\lambda}^{(2)}_k = 0 \;\; {\rm and} \;\; \hat{\lambda}^{(1)}_k + \hat{\lambda}^{(2)}_k \geq 0$.
Note that on the event $\mathcal{A}_{\min}$, we have $N_{\min} \geq 1$.

The remaining of the proof consists in dealing with the two sub-cases when $\hat{\lambda}^{(1)}_k \hat{\lambda}^{(2)}_k = 0 \;\; {\rm and} \;\; \hat{\lambda}^{(1)}_k + \hat{\lambda}^{(2)}_k \geq 0$.
First, let us consider
for $k \in [K]$, the case where $\hat{\lambda}^{(1)}_k \neq 0$, and $\hat{\lambda}^{(2)}_k = 0$ (the case $\hat{\lambda}^{(1)}_k = 0$, and $\hat{\lambda}^{(2)}_k \neq 0$ follows in the same way). We observe that since 
$\hat{\mathfrak{h}}_k^1 = 0$, on the event $\mathcal{A}_{\min}$
\begin{equation}
\label{eq:eqSubgradientCase1}
0   \leq \varepsilon-\dfrac{2(K-1)}{N_{\min}} \leq \sum_s s \hat{\mathbb{P}}_{X|S=s}\left(\forall j \neq k \,\,  \hat{h}_{k}^s(X, \hat{\lambda}^{(1)}_k,\hat{\lambda}^{(2)}_k) > \hat{h}_{j}^s(X, \hat{\lambda}^{(1)}_j, \hat{\lambda}^{(2)}_j)
\right) \leq \varepsilon + \dfrac{2(K-1)}{N_{\rm min}} \enspace.
\end{equation}
Moreover, we have that for each $k \in [K]$ such that
$\hat{\lambda}^{(1)}_k \neq 0$, and $\hat{\lambda}^{(2)}_k = 0$ 
\begin{multline*}
\left| \, \left| 
\sum_s s \mathbb{P}_{X|S=s}\left(\hat{g}_{\varepsilon}(X,S)=k\right)\right| - \varepsilon\right| 
= \\
\left| \, \left|\sum_s s \mathbb{P}_{X|S=s}\left(\forall j \neq k \,\,  \hat{h}_{k}^s(X, \hat{\lambda}^{(1)}_k,\hat{\lambda}^{(2)}_k) > \hat{h}_{j}^s(X, \hat{\lambda}^{(1)}_j, \hat{\lambda}^{(2)}_j)
\right)\right| \right. - \\ \qquad\qquad \left|\sum_s s \hat{\mathbb{P}}_{X|S=s}\left(\forall j \neq k \,\,  \hat{h}_{k}^s(X, \hat{\lambda}^{(1)}_k,\hat{\lambda}^{(2)}_k) > \hat{h}_{j}^s(X, \hat{\lambda}^{(1)}_j, \hat{\lambda}^{(2)}_j)
\right) \right|  +\\  \left. 
\left|\sum_s s \hat{\mathbb{P}}_{X|S=s}\left(\forall j \neq k \,\,  \hat{h}_{k}^s(X, \hat{\lambda}^{(1)}_k,\hat{\lambda}^{(2)}_k) > \hat{h}_{j}^s(X, \hat{\lambda}^{(1)}_j, \hat{\lambda}^{(2)}_j)
\right)\right| - \varepsilon\right|\enspace.
\end{multline*}
Using first the triangle inequality and then the reverse triangle inequality, we get from Equation~\eqref{eq:eqSubgradientCase1}
\begin{multline}
\label{eq:proofcase0neq0}
\left| \, \left| 
\sum_s s \mathbb{P}_{X|S=s}\left(\hat{g}_{\varepsilon}(X,S)=k\right)\right| - \varepsilon\right| \leq \\
\left| \sum_{s\in \mathcal{S}}s
\left(\mathbb{P}_{X|S=s}-\hat{\mathbb{P}}_{X|S=s}\right)\left(\forall j \neq k \,\,  \hat{h}_{k}^s(X, \hat{\lambda}^{(1)}_k,\hat{\lambda}^{(2)}_k) > \hat{h}_{j}^s(X, \hat{\lambda}^{(1)}_j, \hat{\lambda}^{(2)}_j)
\right) \right|+
\dfrac{2(K-1)}{N_{\rm min}} \enspace, 
\end{multline}
which yields together with Lemma~\ref{lm:DKWConsequence} (point 2.),
\begin{equation}
\label{eq:casNotEqual}
\left| \, \left| 
\sum_s s \mathbb{P}_{X|S=s}\left(\hat{g}_{\varepsilon}(X,S)=k\right)\right| - \varepsilon\right| \leq  \left( K \sqrt{ \frac{2  \log(\frac{4K}{\delta})} {N_{\min}}}
+
\dfrac{2(K-1)}{N_{\min}}\right)  \leq 
 \dfrac{ C_{\delta}}{\sqrt{N_{\min}}} \enspace.
\end{equation}

Now, observe that for the second sub-case when $\hat{\lambda}^{(1)}_k =  \hat{\lambda}^{(2)}_k = 0 $, we get using Equation~\eqref{eq:eqlambda0}, applying again Lemma~\ref{lm:DKWConsequence} (point 2.) the following bound
\begin{equation*}
\left| 
\sum_s s \mathbb{P}_{X|S=s}\left(\hat{g}_{\varepsilon}(X,S)=k\right)\right| \leq \varepsilon  + 
 \dfrac{ C_{\delta}}{\sqrt{N_{\min}}} \enspace.
\end{equation*}
Combining these two bounds, we conclude that  
%Hence, from the above inequality and Equation~\eqref{eq:eqlambda0}, applying again Lemma~\ref{lm:DKWConsequence}, we deduce that 
on the event $\mathcal{A}(\delta) = \mathcal{A}_{\rm min} \bigcap \left(\cap_{k \in [K]} \mathcal{A}_k(\delta)\right)$
\begin{equation*}
\mathcal{U}(\hat{g}_{\varepsilon}) <  \varepsilon + \frac{C_{\delta}}{\sqrt{N_{\min}}} \enspace,
\end{equation*}
which concludes the main part of the proof. 
Let us now focus on the particular case where on  $\mathcal{A}(\delta)$ we have
\begin{equation*}
\mathcal{U}(\hat{g}_{\varepsilon}) <  \varepsilon - \frac{C_{\delta}}{\sqrt{N_{\rm min}}} \enspace.
\end{equation*}
Then for all $k\in [K] $ we have 
\begin{equation*}
\left| 
\sum_s s \mathbb{P}_{X|S=s}\left(\hat{g}_{\varepsilon}(X,S)=k\right)\right| < 
 \varepsilon - \frac{C_{\delta}}{\sqrt{N_{\rm min}}} \enspace.
\end{equation*}
Hence on the set $\mathcal{A}(\delta)$, we deduce that the case related to~\eqref{eq:casNotEqual} is not possible and then for each $k$, we necessary have $\hat{\lambda}^{(1)}_k =  \hat{\lambda}^{(2)}_k = 0$ and then 
\begin{equation*}
\hat{g}  = \hat{g}_{\varepsilon} \enspace.
\end{equation*}
To conclude the proof, we observe that
\begin{equation*}
\mathbb{P}\left(\mathcal{A}(\delta)^{c}\right) =\mathbb{P}\left(\mathcal{A}^{c}_{\min}\right) + \sum_{k=1}^K\mathbb{P}\left(\mathcal{A}_k^{c}(\delta)\right) \leq \mathbb{P}\left(N_1 \leq \frac{C_{\delta}^2}{\varepsilon^2}\right)  + \mathbb{P}\left(N_-1 \leq \frac{C_{\delta}^2}{\varepsilon^2}\right)+K\delta. 
\end{equation*}
But, from Lemma~\ref{lm:Hoeffding},
we have for each $s \in \mathcal{S}$,
\begin{equation*}
\mathbb{P}\left(N_s \leq \dfrac{C_{\delta}^2}{\varepsilon^2}\right) \leq \exp\left(-2N\left(\pi_{s}-\dfrac{C_{\delta}^2}{\varepsilon^2 N}\right)^2\right) \leq \exp\left(-N\frac{\pi^2_{s}}{2}\right) \leq \delta \enspace,
\end{equation*}
provided that $\pi_{s} > \dfrac{2C_{\delta}^2}{N\varepsilon^2} $, and $N \geq 2\frac{\log(1/\delta)}{\pi_{\min}^2}$. Since $\varepsilon > \frac{\sqrt{2}C_{\delta}}{\sqrt{\pi_{\min}N}}$, and $N \geq 2\frac{\log(1/\delta)}{\pi_{\min}^2}$, the latter conditions are satisfied. 
Therefore, we deduce that
\begin{equation*}
\mathbb{P}\left(\mathcal{A}(\delta)^{c}\right) \leq (K+2)\delta \enspace.
\end{equation*}
\end{proof}

\begin{proof}[Proof of Theorem~\ref{thm:excessRisk}]

We only consider the case $\varepsilon > 0$. The proof in the case of exact fairness relies on similar arguments and then it is omitted.
To ease the notation, we write $\hat{g}$ instead of $\hat{g}_{\varepsilon}$.

The proof goes conditional on the training data. First, let us decompose the \emph{excess fair-risk} of the classifier $\hat g$ in a convenient way for our analysis
\begin{eqnarray}
\label{eq:eqDecomp1eps}
\mathcal{R}_{\lambda^{*(1)}, \lambda^{*(2)}}(\hat{g})  - \mathcal{R}_{\lambda^{*(1)}, \lambda^{*(2)}}(g_{\varepsilon-\rm fair}^*)  & = & 
\left(\mathcal{R}_{\lambda^{*(1)}, \lambda^{*(2)}}\left(\hat{g}\right)
- \mathcal{R}_{\hat{\lambda}^{(1)}, \hat{\lambda}^{(2)}} (\hat{g})\right)  \nonumber 
\\ && + \left(\mathcal{R}_{\hat{\lambda}^{(1)}, \hat{\lambda}^{(2)}}(\hat{g}) - \mathcal{R}_{\lambda^{*(1)}, \lambda^{*(2)}}\left(g^*_{\lambda^{*(1)}, \lambda^{*(2)}}\right)\right)\enspace.
\end{eqnarray}
According to the first term, we have
\begin{multline*}
\left(\mathcal{R}_{\lambda^{*(1)}, \lambda^{*(2)}}\left(\hat{g}\right)
- \mathcal{R}_{\hat{\lambda}^{(1)}, \hat{\lambda}^{(2)}} (\hat{g})\right)
=  \sum_{k=1}^K\left(\lambda^{*(1)}_k-\hat{\lambda}_k^{(1)}\right)\left[\sum_{s\in \mathcal{S}}s\mathbb{P}_{X|S=s}\left(\hat{g}(X,S) =k\right)-\varepsilon\right]\\
+ \sum_{k=1}^K\left(\lambda^{*(2)}_k-\hat{\lambda}_k^{(2)}\right)\left[-\sum_{s\in \mathcal{S}}s\mathbb{P}_{X|S=s}\left(\hat{g}(X,S) =k\right)
-\varepsilon \right].
\end{multline*}
Let $\delta =1/N$. If $\varepsilon \leq \frac{\sqrt{2}C_{\delta}}{\sqrt{\pi_{\min}N}}$,
since parameters $\lambda^{*(l)}_k$ and $\hat{\lambda}_k^{(l)}$ are bounded, we deduce from the above equation and Theorem~\ref{thm:unfairness} that
\begin{equation}
\label{eq:eqrisk1}
\mathbb{E}\left[\left(\mathcal{R}_{\lambda^{*(1)}, \lambda^{*(2)}}\left(\hat{g}\right)
- \mathcal{R}_{\hat{\lambda}^{(1)} , \hat{\lambda}^{(2)}} (\hat{g})\right)\right] \leq C\min\left(\varepsilon, \frac{\sqrt{2}C_{\delta}}{\sqrt{\pi_{\min}N}}\right) + \dfrac{C}{\sqrt{N}} \leq 
C \dfrac{\log(N)}{\sqrt{N}} \enspace,
\end{equation}
where $C > 0$ is a constant which depends on $\pi_{\min}$ and $K$. 
If $\varepsilon > \frac{\sqrt{2}C_{\delta}}{\sqrt{\pi_{\min}N}}$, we apply Theorem~\ref{thm:caracEpsfairEstimator}. 
We have on the event $\mathcal{A}(1/N)$ that 
\begin{itemize}
\item either $\hat{\lambda}_k^{(1)}=0$, and then since $\lambda_k^{*(1)} > 0$ is bounded
\begin{equation*}
\left(\lambda^{*(1)}_k-\hat{\lambda}_k^{(1)}\right)\left[\sum_{s\in \mathcal{S}}s\mathbb{P}_{X|S=s}\left(\hat{g}(X,S) =k\right)-\varepsilon\right] \leq C \left(\mathcal{U}(\hat{g})-\varepsilon)\right)\leq C\dfrac{\log(N)}{\sqrt{N_{\min}}}\enspace.
\end{equation*}
\item or $\hat{\lambda}_k^{(1)} > 0$, in this case on $\mathcal{A}(1/N)$, $\sum_{s\in \mathcal{S}}s\mathbb{P}_{X|S=s}\left(\hat{g}(X,S) =k\right) > 0$. From Equation~\eqref{eq:casNotEqual} in the proof of Theorem~\ref{thm:caracEpsfairEstimator}, we deduce
\begin{equation*}
\left(\lambda^{*(1)}_k-\hat{\lambda}_k^{(1)}\right)\left[\sum_{s\in \mathcal{S}}s\mathbb{P}_{X|S=s}\left(\hat{g}(X,S) =k\right)-\varepsilon\right] \leq C\dfrac{\log(N)}{\sqrt{N_{\min}}}\enspace.
\end{equation*}
\end{itemize}
Since on $\mathcal{A}(1/N)$, $N_{\min} \geq 1$, we deduce that if $\varepsilon > \frac{\sqrt{2}C_{\delta}}{\sqrt{\pi_{\min}N}}$
\begin{equation*}
\mathbb{E}\left[\sum_{k=1}^K\left(\lambda^{*(1)}_k-\hat{\lambda}_k^{(1)}\right)\left[\sum_{s\in \mathcal{S}}s\mathbb{P}_{X|S=s}\left(\hat{g}(X,S) =k\right)-\varepsilon\right]\right] \leq 
C\left(\mathbb{E}\left[\dfrac{\log(N)\one{\{N_{\min \geq 1\}}}}{\sqrt{ N_{\min}}}\right] + \mathbb{P}\left(\mathcal{A}(1/N)^{c}\right)\right).
\end{equation*}
According to Lemma~\ref{lm:DKWConsequence} we have $\mathbb{P}\left(\mathcal{A}(1/N)^{c}\right) \leq \dfrac{K+2}{N}$. Then we deduce form Lemma~\ref{lm:InverseBinom} that
\begin{equation*}
\mathbb{E}\left[\sum_{k=1}^K\left(\lambda^{*(1)}_k-\hat{\lambda}_k^{(1)} \right)\left[\sum_{s\in \mathcal{S}}s\mathbb{P}_{X|S=s}\left(\hat{g}(X,S) =k\right)-\varepsilon\right]\right] \leq C\dfrac{\log(N)}{\sqrt{N}}.
\end{equation*}
Similar reasoning leads to
\begin{equation*}
\mathbb{E}\left[\sum_{k=1}^K\left(\lambda^{*(2)}_k-\hat{\lambda}_k^{(2)}\right)\left[-\sum_{s\in \mathcal{S}}s\mathbb{P}_{X|S=s}\left(\hat{g}(X,S) =k\right)-\varepsilon\right]\right] \leq C\dfrac{\log(N)}{\sqrt{N}}.
\end{equation*}
Combining the two above inequalities and Equation~\eqref{eq:eqrisk1},
we obtain for $\varepsilon > 0$ and $N$ large enough
\begin{equation}
\label{eq:eqrisk2}
\mathbb{E}\left[\left(\mathcal{R}_{\lambda^{*(1)}, \lambda^{*(2)}}\left(\hat{g}\right)
- \mathcal{R}_{\hat{\lambda}^{(1)}, \hat{\lambda}^{(2)}} (\hat{g})\right)\right] \leq
C \dfrac{\log(N)}{\sqrt{N}}.
\end{equation}

Then we have shown that the first term in the r.h.s. of Eq.~\eqref{eq:eqDecomp1eps} relies on the unfairness of the classifier $\hat{g}$. 
Now, let us consider the second term in r.h.s. of Equation~\eqref{eq:eqDecomp1eps}.
Our goal will be to show that this term mainly depends on the quality of the base estimators $\hat{p}_k$.
Since $\left(\lambda^{*(1)}, \lambda^{*(2)}\right)$ is a maximizer of $\mathcal{R}_{(\lambda^{(1)},\lambda^{(2)})} (g^*_{(\lambda^{(1)},\lambda^{(2)})})$ over $(\lambda^{(1)},\lambda^{(2)})$, it is clear that, conditional on the data,
$\mathcal{R}_{\lambda^{*(1)}, \lambda^{*(2)}} (g^*_{\lambda^{*(1)}, \lambda^{*(2)}}) \geq \mathcal{R}_{\hat{\lambda}^{(1)}, \hat{\lambda}^{(2)}}(g^*_{\hat{\lambda}^{(1)}, \hat{\lambda}^{(2)}})$. (The parameter $\hat{\lambda}$ is seen as fixed conditional on the data.)
Therefore, we have
\begin{equation*}
\mathcal{R}_{\hat{\lambda}^{(1)}, \hat{\lambda}^{(2)}}(\hat{g}) - \mathcal{R}_{\lambda^{*(1)}, \lambda^{*(2)}}\left(g^*_{\lambda^{*(1)}, \lambda^{*(2)}}\right) \leq \mathcal{R}_{\hat{\lambda}^{(1)}, \hat{\lambda}^{(2)}}(\hat{g}) - \mathcal{R}_{\hat{\lambda}^{(1)}, \hat{\lambda}^{(2)}}(g^*_{\hat{\lambda}^{(1)}, \hat{\lambda}^{(2)}})\enspace.
\end{equation*}
By introducing $\hat{g}^*_{\hat{\lambda}^{(1)}, \hat{\lambda}^{(2)}}$, we remove the estimation of $\lambda^{*(1)}, \lambda^{*(2)}$ from the study of $\mathcal{R}_{\hat{\lambda}^{(1)}, \hat{\lambda}^{(2)}}(\hat{g}) - \mathcal{R}_{\lambda^{*(1)}, \lambda^{*(2)}}\left(g^*_{\lambda^{*(1)}, \lambda^{*(2)}}\right)$. At this point, it becomes clear that bounding this term does not relies on the unlabeled sample sizes $N_s$. Let us recall the definition of $g^*_{\hat{\lambda}^{(1)}, \hat{\lambda}^{(2)}}$: conditional on the data
\begin{equation*}
g^*_{\hat{\lambda}^{(1)}, \hat{\lambda}^{(2)}} \in \arg\min_{g \in \mathcal{G}} \mathcal{R}_{\hat{\lambda}^{(1)}, \hat{\lambda}^{(2)}}(g)\enspace.
\end{equation*}
Then using similar arguments as those leading to Eq.~\eqref{eq:eqOracleLambda} implies that 
\begin{equation*}
g^*_{\hat{\lambda}^{(1)}, \hat{\lambda}^{(2)}} (x,s)\in \arg\max_{k \in [K]}  \left(\pi_s p_k(x,s)-s (\hat{\lambda}^{(1)}_k-\hat{\lambda}^{(2)}_k\right) \enspace.
\end{equation*}
As a consequence, using the writing of the fair-risk provided by Lemma~\ref{lem:eqDecompLambdaEps}
\begin{multline}
\label{eq:eqrisk3}
\mathcal{R}_{\hat{\lambda}^{(1)}, \hat{\lambda}^{(2)}}(\hat{g}) - \mathcal{R}_{\hat{\lambda}^{(1)}, \hat{\lambda}^{(2)}}(g^*_{\hat{\lambda}^{(1)}, \hat{\lambda}^{(2)}})  =\\ \sum_{s \in \mathcal{S}} \mathbb{E}_{X|S=s}\left[\max_{k \in[K]} \left(\pi_s p_k(X,s)-s(\hat{\lambda}^{(1)}_k-\hat{\lambda}^{(2)}_k)\right)-
\sum_{k=1}^K \left(\pi_s p_k(X,s)-s(\hat{\lambda}^{(1)}_k-\hat{\lambda}_k^{(2)})\right)\one_{\{\hat{g}(X,s) = k\}}\right]\enspace.
\end{multline}
Because of the indicator function, there is only one non-zero element in the inner sum. Then we observe that for each $s \in \mathcal{S}$
\begin{multline*}
\left|\max_{k \in [K]}\left(\pi_s p_k(X,s)-s(\hat{\lambda}^{(1)}_k-\hat{\lambda}_k^{(2)})\right)-
\sum_{k=1}^K \left(\pi_s p_k(X,S)-s(\hat{\lambda}^{(1)}_k-\hat{\lambda}_k^{(2)})\right)\one_{\{\hat{g}(X,s) = k\}}\right| \\ \leq  2 \max_{k \in [K]}\left| (\pi_s p_k(X,s)-s(\hat{\lambda}^{(1)}_k-\hat{\lambda}^{(2)}_k)) - (\hat{\pi}_s \bar{p}_k(X,s)-s(\hat{\lambda}^{(1)}_k-\hat{\lambda}_k^{(2)}))\right|\\
\leq 2 \left(\max_{k \in [K]}\left|p_k(X,s)-\bar{p}_{k}(X,s) \right| + \left|\pi_s-\hat{\pi}_s\right|\right)\enspace,
\end{multline*}
where the last inequality is due to the fact that  $\pi_s, \hat{\pi}_s, p_k$, and $\bar{p}_k$ are all in $[0,1]$. Therefore, recalling that $\bar{p}_k$ is a randomized version of $\hat{p}_k$ we can write
\begin{equation*}
\mathcal{R}_{\hat{\lambda}^{(1)}, \hat{\lambda}^{(2)}}(\hat{g}) - \mathcal{R}_{\hat{\lambda}^{(1)}, \hat{\lambda}^{(2)}}(g^*_{\hat{\lambda}^{(1)}, \hat{\lambda}^{(2)}}) \leq C\left(\|\mathbf{\hat{p}}-\mathbf{p}\|_1 + \sum_{s \in \mathcal{S}} |\hat{\pi}_s-\pi_s| + u\right)\enspace,
\end{equation*}
and obtain the bound 
\begin{equation*}
\mathcal{R}_{\hat{\lambda}^{(1)}, \hat{\lambda}^{(2)}}(\hat{g}) - \mathcal{R}_{\lambda^{*(1)}, \lambda^{*(2)}}\left(g^*_{\lambda^{*(1)}, \lambda^{*(2)}}\right) \leq C\left(\|\mathbf{\hat{p}}-\mathbf{p}\|_1 + \sum_{s \in \mathcal{S}} |\hat{\pi}_s-\pi_s| + u\right)\enspace.
\end{equation*}
In view of Equation~\eqref{eq:eqrisk1}, the above inequality together with Equation~\eqref{eq:eqrisk2} yield the desired result. 
\end{proof}

\begin{proof}[Proof of Theorem~\ref{thm:fastRates}]

Let us remind the reader that for each $k \in [K]$, and $s \in \mathcal{S}$
\begin{equation*}
{h}_{k}^s(X, \hat{\lambda}^{(1)}_k,\hat{\lambda}^{(2)}_k):=
\left(\pi_s p_k(X,s)-s(\hat{\lambda}^{(1)}_k-\hat{\lambda}_k^{(2)})\right).
\end{equation*}
We start the proof with Equation~\eqref{eq:eqrisk3},
\begin{multline*}
\mathcal{R}_{\hat{\lambda}^{(1)}, \hat{\lambda}^{(2)}}(\hat{g}) - \mathcal{R}_{\hat{\lambda}^{(1)}, \hat{\lambda}^{(2)}}(g^*_{\hat{\lambda}^{(1)}, \hat{\lambda}^{(2)}})  =\\ 
\sum_{s \in \mathcal{S}} \mathbb{E}_{X|S=s}\left[\max_{k \in[K]}{h}_{k}^s(X, \hat{\lambda}^{(1)}_k,\hat{\lambda}^{(2)}_k))-
\sum_{k=1}^K {h}_{k}^s(X, \hat{\lambda}^{(1)}_k,\hat{\lambda}^{(2)}_k)\one_{\{\hat{g}(X,S) = k\}}\right]\enspace.
\end{multline*}
Furthermore, we have that
\begin{equation*}
g^*_{\hat{\lambda}^{(1)}, \hat{\lambda}^{(2)}}(X,s) \in \arg\max_{k \in [K]}  {h}_{k}^s\left(X, \hat{\lambda}^{(1)}_k,\hat{\lambda}^{(2)}_k \right) \enspace.
\end{equation*}
Therefore, we observe that
\begin{multline}
\label{eq:eqrisk4}
\max_{k \in[K]} {h}_{k}^s(X, \hat{\lambda}^{(1)}_k,\hat{\lambda}^{(2)}_k))-
\sum_{k=1}^K {h}_{k}^s(X, \hat{\lambda}^{(1)}_k,\hat{\lambda}^{(2)}_k)\one_{\{\hat{g}(X,S) = k\}}  = \\\sum_{i=1, k\neq i}^K \left|{h}_{i}^s(X, \hat{\lambda}^{(1)}_i,\hat{\lambda}^{(2)}_i)-{h}_{k}^s(X, \hat{\lambda}^{(1)}_k,\hat{\lambda}^{(2)}_k) \right| \one_{\{g^*_{\hat{\lambda}^{(1)}, \hat{\lambda}^{(2)}}(X,s) =i\}} \one_{\{\hat{g}(X,s) = k\}}.
\end{multline}
Moreover, for $k \neq i$  on the event 
$\left\{g^*_{\hat{\lambda}^{(1)}, \hat{\lambda}^{(2)}}(X,s) =i, \hat{g}(X,s) = k \right\}$, we have from Equation~\eqref{eq:eqrisk3}
\begin{equation}
\label{eq:eqrisk5}
\left|{h}_{i}^s(X, \hat{\lambda}^{(1)}_i,\hat{\lambda}^{(2)}_i)-{h}_{k}^s(X, \hat{\lambda}^{(1)}_k,\hat{\lambda}^{(2)}_k) \right| 
\leq 2 \max_{s \in\mathcal{S}} \left(\max_{{k}\in [K]} \sup_{x} \left|p_k(x,s)-\bar{p}_{k}(x,s) \right| + \left|\pi_s-\hat{\pi}_s\right|\right).
\end{equation}
Now, we observe that from Assumption~\ref{ass:denistyAss}, conditional on the data for each $s \in \mathcal{S}$
\begin{multline*}
\mathbb{P}_{X|S=s}\left(\left|{h}_{i}^s(X, \hat{\lambda}^{(1)}_i,\hat{\lambda}^{(2)}_i)-{h}_{k}^s(X, \hat{\lambda}^{(1)}_k,\hat{\lambda}^{(2)}_k) \right| 
\leq 2 \left(\max_{{k}\in [K]} \sup_{x} \left|p_k(x,s)-\bar{p}_{k}(x,s) \right| + \left|\pi_s-\hat{\pi}_s\right|\right)\right) \\
\leq C \left(\max_{{k}\in [K]} \sup_{x} \left|p_k(x,s)-\bar{p}_{k}(x,s) \right| + \left|\pi_s-\hat{\pi}_s\right|\right).
\end{multline*}
Combining the above inequality with Equation~\eqref{eq:eqrisk3}, Equation~\eqref{eq:eqrisk4}, and Equation~\eqref{eq:eqrisk5}, we obtain that
\begin{multline*}
\mathcal{R}_{\hat{\lambda}^{(1)}, \hat{\lambda}^{(2)}}(\hat{g}) - \mathcal{R}_{\hat{\lambda}^{(1)}, \hat{\lambda}^{(2)}}(g^*_{\hat{\lambda}^{(1)}, \hat{\lambda}^{(2)}}) \leq C\sum_{s \in \mathcal{S}}  \left(\max_{{k}\in [K]} \sup_{x} \left|p_k(x,s)-\bar{p}_{k}(x,s) \right| + \left|\pi_s-\hat{\pi}_s\right|\right)^2 \\
\leq C  \left(\left\|\hat{\bf p} -\bf{p}\right\|^2_{\infty} + u^2 +\sum_{s \in \mathcal{S}} \left|\hat{\pi}_s -\pi_s\right|^2\right).  
\end{multline*}
Finally, we deduce again the desired result from the above inequality, Equation~\eqref{eq:eqrisk1}, and Equation~\eqref{eq:eqrisk2}.

\end{proof}

\section{Additional numerical experiments}
\label{sec:add_numres}
\begin{figure}[h!]
\begin{center}
\includegraphics[scale=0.45]{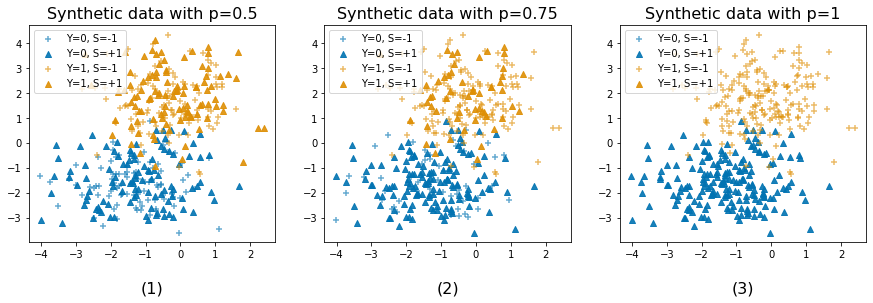}
\caption{Example of synthetic data in binary case where $d = 2$ and $m = 1$. The level of unfairness is set as follows: \textit{(1)} $p=0.5$ (no unfairness); \textit{(2)} $p=0.75$ (unfair dataset); \textit{(3)} $p=1$ (highly unfair dataset).}
\label{fig:syn_data}
\end{center}
\end{figure}

\end{document}